\numberwithin{equation}{section}
\theoremstyle{plain}
\newtheorem{thm}{Theorem}
\newtheorem{lemma}{Lemma}
\newtheorem{proposition}{Proposition}
\theoremstyle{definition}
\newtheorem{definition}{Definition}
\newtheorem{assumption}{Assumption}
\def\one{{\bf 1}\hskip-.5mm}
\def\E{{\mathbb E}}
\def\P{{\mathbb P}}
\def\R{{\mathbb R}}
\def\Z{{\mathbb Z}}
\def\P {\mathbb{P}}
\def\N{{\mathbb N}}
\def\cF {\mathcal{F}}
\newcommand\floor[1]{\lfloor#1\rfloor}
\begin{document}

\begin{frontmatter}
\title{Estimating the interaction graph of stochastic neural dynamics}
\runtitle{Estimating the interaction graph for neural dynamics}

\begin{aug}
\author{\fnms{A.} \snm{Duarte}\thanksref{m1}\ead[label=e1]{aline.duart@gmail.com}},
\author{\fnms{A.} \snm{Galves}\thanksref{m1}\ead[label=e2]{galves@usp.br}},
\author{\fnms{E.} \snm{L\"ocherbach}\thanksref{m2}\ead[label=e3]{eva.loecherbach@u-cergy.fr}}
\and
\author{\fnms{G.} \snm{Ost}\thanksref{m1}
\ead[label=e4]{guilhermeost@gmail.com}
\ead[label=u1,url]{http://www.foo.com}}

\runauthor{A. Duarte et al.}

\affiliation{Universidade de S\~ao Paulo\thanksmark{m1} and Universit\'e de Cergy-Pontoise\thanksmark{m2}}

\address{
E. L\"ocherbach\\
Universit\'e de Cergy-Pontoise\\ 
AGM, CNRS-UMR 8088,\\
95000 Cergy-Pontoise,\\
France\\
\printead{e3}}

\address{
A. Duarte\\
A. Galves \\
G. Ost\\
Universidade de S\~ao Paulo\\
Instituto de Matem\'atica e Estat\'istica\\
S\~ao Paulo, Brazil\\
\printead{e1}\\
\printead{e2}\\
\printead{e4}
}

\end{aug}

\begin{center}
{\it Dedicated to Enza Orlandi, in memoriam}
\end{center}
\begin{abstract}
In this paper we address the question of statistical model selection for a class of  stochastic models of biological neural nets.  Models in this class are systems of interacting chains with memory of variable length. Each chain describes the activity of a single neuron, indicating whether it spikes or not at a given time.  The spiking probability of a given neuron depends on the time evolution of its {\it presynaptic neurons} since its last spike time. 
When a neuron spikes, its potential is reset to a resting level and postsynaptic current pulses are generated, modifying the membrane potential of all its \textit{postsynaptic neurons}. The relationship between a neuron and its pre- and postsynaptic neurons defines an oriented graph, the {\it interaction graph} of the model.  The goal of this paper is to estimate this graph based on the  observation of the spike activity of a finite set of neurons over a finite time. 
We provide explicit exponential upper bounds for the probabilities of under- and overestimating the interaction graph restricted to the observed set and obtain the strong consistency of the estimator. 
Our result does not require stationarity nor uniqueness of the invariant measure of the process.
\end{abstract}

\begin{keyword}[class=MSC]
\kwd[]{60K35}
\kwd{62M30}
\end{keyword}

\begin{keyword}
\kwd{statistical model selection}
\kwd{graph of interactions}
\kwd{biological neural nets}
\kwd{interacting chains of variable memory length}
\end{keyword}

\end{frontmatter}

\section{Introduction}
This paper addresses the question of statistical model selection for a class of stochastic processes describing biological neural networks.  The activity of the neural net is described by a countable system of interacting chains with memory of variable length representing the spiking activity of the different neurons.   The interactions between neurons are defined in terms of their interaction neighborhoods. The interaction neighborhood of a neuron is given by the set of all its presynaptic neurons. We introduce a statistical selection procedure in this class of stochastic models to estimate the interaction neighborhoods.

The stochastic neural net we consider can be described as follows. Each neuron spikes with a probability which is an increasing function of its membrane potential.  The membrane potential of a given neuron depends on the accumulated spikes coming from the presynaptic neurons since its last spike time. 
When a neuron spikes, its potential is reset to a resting level and at
the same time postsynaptic current pulses are generated, modifying the membrane potential of all its postsynaptic neurons.

Recently, several papers have been devoted to the probabilistic study of these models, starting with \cite{GalEva:13} who provided a rigorous mathematical framework to study such systems with an infinite number of interacting components, evolving in discrete time. Its continuous time version has been subsequently studied in \cite{Errico:14}, \cite{AG:14}, \cite{Evafou:14},  \cite{RobTou:14}, \cite{PierreEva:14}, \cite{Duarte_Ost:15} and \cite{Karina:15}. 
We also refer to \cite{Brochinietal:16} and the references therein for a simulation study and mean field analysis.
All these papers deal with probabilistic aspects of the model, not with statistical model selection. 

Statistical model selection for graphical models has been largely discussed in the literature.
Recently much effort has been devoted to
estimating the graph of interactions underlying e.g.\ finite volume Ising models (\cite{ravikumar:10}, \cite{Montanari:09}, \cite{BreElcSly:08} and \cite{Bresler:15}), infinite volume Ising models (\cite{GalOrlTak:15}, \cite{lerTak:11} and \cite{lerTak:16}),  Markov random fields (\cite{CT} and \cite{Talata:14}) and variable-neighborhood random fields (\cite{LochOrla:11}).

Graphical models are  very interesting from a mathematical point of view. However, their application to the stochastic modeling of neural data has a major drawback, namely the assumption that the configuration describing the neural activity at a given time follows a Gibbsian distribution. To the best of our knowledge, this Gibbsian assumption is not supported by biological considerations of any kind. 

Statistical methods for selecting the graph of interactions in neural nets start probably with \cite{Brillinger1979} and \cite{brillinger:88}. Recently, new approaches have been proposed by \cite{reynaud:13} in the framework of  multivariate point processes and Hawkes processes. Let us also mention   \cite{plosof}  where a new experimental design is introduced and studied from a numerical point of view. 
These articles have the following drawback. The firstly mentioned ones only consider finite systems of neurons or processes having a fixed finite memory, the lastly cited does only propose a numerical study.

The present paper provides a rigorous mathematical approach to the problem of inference in neural dynamics. Its main features are the following. 

\begin{enumerate}
\item \label{item1}
 The processes we consider are not Markovian. They are systems of interacting chains with memory of variable length. 
\item \label{item2}
Our approach does not rely on any Gibbsian assumption. 
\item \label{item3}
We can deal with the case in which the system possesses several invariant measures.  
\item \label{item4}
Infinite systems of neurons can be also treated under suitable assumptions on the synaptic weights.  
\end{enumerate}

Let us briefly comment on Features \ref{item3} and \ref{item4}. Feature \ref{item3} makes our model suitable to describe long-term memory in which the asymptotic law of the system depends on its initial configuration.  This initial configuration can be seen as the effect of an external stimulus to which the system is exposed at the beginning of the experiment. In this perspective, the asymptotic distribution can be interpreted as the neural encoding of this stimulus by the brain. Feature \ref{item4} enables us to deal with arbitrarily high dimensional systems, taking into account the fact that the brain consists of a huge (about $10^{11}$) number of interacting neurons. 

The models we consider can be seen as a version of the Integrate and
Fire (IF) model with random thresholds, but only in cases in which the
postsynaptic current pulses are of the exponential type. Indeed, only in such cases, the time evolution of the family
of membrane potentials is a Markov process. For general postsynaptic current pulses, this is not true.

Therefore, we could say that our model is a non-Markovian version of the IF model with random thresholds. This fact places the class of models considered here within a  classical and widely accepted framework of modern neuroscience. Indeed, as pointed out by an anonymous referee,  IF models have a long and rich history, going back to the fundamental work \cite{HH}. For more insights on IF-models we refer the interested reader to classical textbooks such as \cite{dyan} and \cite{Gerstner:2002:SNM:583784}.

The inherent randomness of the thresholds in our model leads to random neuronal responses instead of deterministic  ones. The idea that the spike activity is intrinsically random  and not deterministic can be traced back to  \cite{adrian}, see also \cite{adrian2}. Under the name of ``escape noise''-models, this question has then been further emphasized by \cite{vanhemmen} and \cite{Gerstner95}.   

We conclude this introduction by briefly describing the statistical selection procedure we propose. We observe the process within a finite sampling region during a finite time interval. For each neuron $i$ in the sample, we estimate 
its spiking probability given the spike trains of all other neurons since its last spike time. For each neuron $j\neq i ,$ we then introduce a measure of sensibility of this conditional spiking probability with respect to changes within the spike train of neuron $ j.$ If this measure of sensibility is {\it statistically small}, we
conclude that neuron $j$ does not belong to the interaction neighborhood of neuron $i$. 

For this selection procedure, we give precise error bounds for the probabilities of under- and overestimating finite interaction neighborhoods implying the consistency of the procedure in Theorem \ref{thm:1}. For interaction neighborhoods which are not contained within the sampling region, a coupling between the process and its locally finite range approximation reduces the estimation problem to the situation of Theorem \ref{thm:1}. The coupling result is presented in Proposition \ref{thm:couplage}. In our proofs we rely on a new conditional Hoeffding-type inequality which is of independent interest, stated in Proposition \ref{prop:1}.   

We stress that, in our class of models, the probability of a neuron to spike depends only on the history of the process since its last spike time. Therefore, temporal dependencies do not need to be estimated, making our estimation problem different from classical context tree estimation as considered in \cite{CT} and \cite{GalvesLeonardi:08}.
We refer the reader to the companion paper by \cite{Brochinietal:17} where our statistical selection procedure is explored, tested and applied to the analysis of simulated and real neural activity data. 

The paper is organized as follows. In Section \ref{sec:def} we introduce the model and the selection procedure, present the main assumptions and formulate the main results, Theorems \ref{thm:1} and \ref{thm:main2}. In Section \ref{sec:dev.ineq}, we derive some exponential inequalities including a new conditional Hoeffding-type inequality, presented in Proposition \ref{prop:1}, which is interesting by itself. 
The proofs of Theorems \ref{thm:1} and \ref{thm:main2} are presented in Sections \ref{sec:proof.finite.case} and \ref{sec:proofs.infinite.case}  respectively. In section \ref{sec:proof.runtime} we discuss the time complexity of our selection procedure.   In Appendix \ref{sec:proba} we present the coupling result stated in Proposition \ref{thm:couplage}. 


\section{Model definition and main results}\label{sec:def}
\subsection{A stochastic model for a system of interacting neurons}  
Throughout this article,  $I$  denotes a countable set,  $W_{j \to i}\in \R$ with $i,j\in I$,  a collection of real numbers such that $W_{j \to j } = 0 $ for all $j,$ and for $i\in I$, $\varphi_i : \R  \to [ 0, 1 ]$ a  non-decreasing measurable function and $g_i=(g_i(t))_{t\geq 1}$ a sequence of strictly positive real numbers. 

In order to be consistent with the neuroscience terminology (see \cite{Gerstner:2002:SNM:583784}), we call 
\begin{itemize}
\item
$I$ the {\it set of neurons}, 
\item
$ W_{j \to i } $ the {\it synaptic weight of neuron $j$ on neuron $i,$} 
\item
$\varphi_i $ the  {\it spike rate function} of neuron $i$,
\item $g_i $ the {\it postsynaptic current pulse} of neuron i.
\end{itemize}

We consider a stochastic chain $(X_t )_{ t \in \Z }$ taking values in $ \{ 0, 1 \}^I ,$ defined on a suitable probability space $ ( \Omega , { \cal A} , \P ) .$ For each $i\in I$ and $t \in \Z,$  $ X_t (i) = 1 $, if neuron $i$ spikes at time $t$ and $X_t(i) = 0,$ otherwise. For each neuron $i \in I$ and each time $t \in \Z $, let
\begin{equation}
\label{def:0}
L_t^i = \sup \{ s \leq t  : X_s (i) = 1  \}, 
\end{equation} 
be the last spike time of neuron $i$ before time $t.$ Here, we adopt the convention that  $\sup \{ \emptyset  \}=-\infty.$ 

For each $t \in \Z $, we call $\cF_t$ the sigma algebra generated by the past events up to time $t $, that is, 
$$ \cF_t :=\sigma(X_s(j),s\leq t,j\in I ).$$
The stochastic chain $(X_t )_{ t \in \Z }$ is defined as follows. For each time $t \in \Z, $ for any finite set $ F \subset I $ and any choice $ a(i ) \in \{ 0, 1 \} , i \in F,$ 
\begin{equation}
\label{def:1}
\P(X_{t+1}(i) =a(i), i \in F |\cF_{t})=\prod_{i\in F}\P(X_{t+1}(i)=a(i) |\cF_{t}),
\end{equation}
where for each $i\in I$ and $t\in \Z,$
\begin{equation}
\label{def:2}
\P(X_{t+1}(i)=1|\cF_{t})= \varphi_i\Big( \sum_{j\in I}W_{j\to i}\sum_{s=L_{t}^i+1}^{t} g_j(t+1 -s)X_s(j)\Big),
\end{equation}
if $ L_{t}^i < t ,$ and 
$$
\P(X_{t+1}(i)=1|\cF_{t})= \varphi_i (0 ),
$$
otherwise.  

Suitable conditions ensuring the existence of a stochastic chain of this type are presented at the beginning of Section \ref{sec:2.3}.

\subsection{Neighborhood estimation procedure}
We write
$$ x = (x_t (i ) )_{-\infty < t \le 0, i \in I}$$ 
for any configuration $ x  \in \{0,1\}^{I\times \{\ldots,-1,0\}} ,$ and for any  $F \subset I, $ we  write
$$ x_t ( F) = (x_t ( i ) , i \in F ) .$$ 
Moreover, for any $x \in \{0,1\}^{I\times \{\ldots,-1,0\}} , $ 
$$X_{-\infty}^0 =  x, \mbox{  if $ X_t (i ) = x_t (i) $ for all $ - \infty < t \le 0 $ and for all $ i \in I.$}$$ 
Finally, for any $\ell\geq 1$, $t  \in \Z, $ $F \subset I $ and $w\in  \{ 0, 1\}^{ \{ - \ell , \ldots , -1\} \times F} ,$ we  write 
$$ X^{t-1}_{t- \ell } ( F) = w, \mbox{ if $ X_{t-s } (j ) = w_{-s} (j ) , $ for all $ 1 \le s \le \ell $ and for all $ j \in F$}$$
and 
$$ X_{t - \ell - 1 }^{t - 1 } (i ) = 1 0^\ell, \mbox{ if } X_{t -s} (i ) = 0 , \mbox{ for all } 1 \le s \le \ell  \ \mbox{and} \ X_{t - \ell - 1 } (i ) = 1 .$$ 
Throughout the article, $s, t \in \Z $ will be time indices, while $n \in \N$ will be saved for future use as the length of the time interval during which the neural network is observed.  

Let 
\begin{equation}
\label{def:Vi}
V_i=\{j\in I\setminus\{i\}: W_{j\to i}\neq 0\}
\end{equation}
be the set of presynaptic neurons of neuron $i$. The set $V_i$ is called the {\em interaction neighborhood} of neuron $i$.  
The goal of our statistical selection procedure is to identify the set  $V_{i}$ from the data in a consistent way. 

Let $X_1(F),\ldots, X_n(F)$ be a sample where $F\subset I$ is a finite sampling region and $n\geq 3$ is the length of the time interval during which the network has been observed. 
For any fixed $i \in F,$ we want to estimate its interaction neighborhood $V_i.$

\begin{figure}[h!]
\includegraphics[scale=0.35]{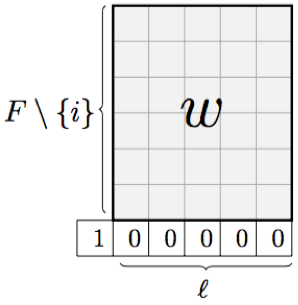}
\caption{Local past $w\in  \{ 0, 1\}^{ \{ - \ell , \ldots , -1\} \times F  \setminus \{i\}} $ outside of $i$ with $\ell=5$ and $|F|=7$.}
\label{fig:1}
\end{figure}

Our procedure is defined as follows. For each $1\leq \ell\leq n-2$, 
local past $w\in  \{ 0, 1\}^{ \{ - \ell , \ldots , -1\} \times F  \setminus \{i\}} $ outside of $i$ (see Figure \ref{fig:1}) and symbol $a\in\{0,1\}$, we define 
\begin{equation*}
N_{(i,n)}(w,a)=
\sum_{t=\ell+2}^{n}{\one}{\{ X_{t-\ell-1}^{t-1}(i)=10^{\ell},X_{t-\ell}^{t-1}(F \setminus\{i\})=w,X_t(i)=a\}}.
\end{equation*}
The random variable $N_{(i,n)}(w,a)$ counts the number of occurrences of $w$ followed or not by a spike of neuron $i$ ($a=1$ or $a=0,$ respectively) in the sample $X_1 (F) ,\ldots, X_n (F) ,$ when the last spike of neuron $i$ has occurred $\ell + 1$ time steps before in the past, see Figure \ref{fig:2}. 

\begin{figure}[h!]
\includegraphics[scale=0.35]{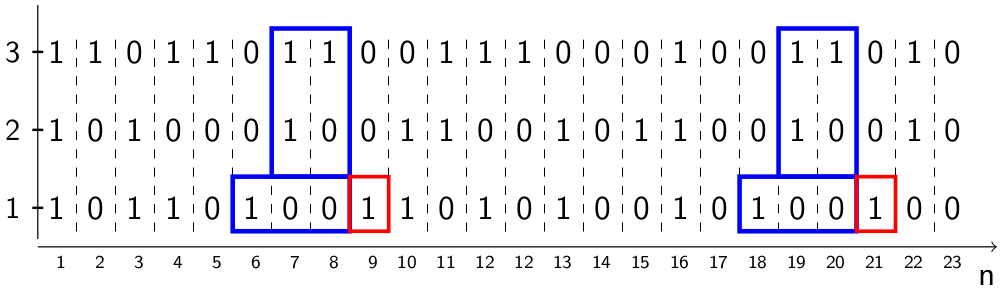}
\caption{Example for $N_{(i,n)}(w,1) = 2,$ where $i=1,$ for a given word $ w$ (in blue), $\ell=2,$ $|F|=3$ and $n = 23.$}
\label{fig:2}
\end{figure}

We define the empirical probability of neuron $i$ having a spike at the next step given $w$ by  
\begin{equation}
\label{def:trans.prob.emp}
\hat{p}_{(i,n)}(1|w)=\frac{N_{(i,n)}(w,1)}{ N_{(i,n)}(w)},
\end{equation}
when $N_{(i,n)}(w):=N_{(i,n)}(w, 0 ) +N_{(i,n)}(w, 1 )>0.$ 

For any fixed parameter $\xi\in (0,1/2)$, we consider the following set
\begin{equation}\label{eq:xi}
\mathcal{T}_{(i,n)}= \Big\{w\in  \bigcup_{\ell =1}^{n-2}\{ 0, 1\}^{ \{ - \ell , \ldots , -1\} \times F \setminus \{i\}} :N_{(i,n)}(w)\geq n^{1/2+\xi} \ \Big\}. 
\end{equation} 

We use the notation $|w| = \ell$ whenever $w\in \{ 0, 1\}^{ \{ - \ell , \ldots , -1\} \times F \setminus \{i\}} . $
If $ v, w $ both belong to $\{ 0, 1\}^{ \{ - \ell , \ldots , -1\} \times F  \setminus \{i\}} $  we write 
$$ 
v_{ \{j \}^c }   = w_{ \{ j \}^c }     \mbox{ if  and only if }  v_{-\ell }^{-1}\big(F\setminus\{j\}\big)=
w_{-\ell }^{-1}\big(F\setminus\{j\}\big)  .
$$
In words, the equality $v_{ \{j \}^c }   = w_{ \{ j \}^c }$ means that  $v$ and $w$ coincide on all but the $j$-th coordinate.

Finally, for each $w\in\mathcal{T}_{(i,n)}$ and for any $ j \in F \setminus \{i\} $ we define the set
\begin{equation*}
\mathcal{T}^{w,j}_{(i,n)}= \Big\{v \in \mathcal{T}_{(i,n)}: |v|=|w|,  v_{ \{j \}^c }   = w_{ \{ j \}^c }  \Big\}
\end{equation*}
and introduce the \textit{measure of sensibility}
$$
\Delta_{(i,n)}(j)=\max_{w\in \mathcal{T}_{(i,n)}}\max_{v \in \mathcal{T}^{w,j}_{(i,n)}}|\hat{p}_{(i,n)}(1|w)-\hat{p}_{(i,n)}(1|v )|.
$$
Our interaction neighborhood estimator is defined as follows. 

\begin{definition}
For any positive threshold parameter $\epsilon>0$, the estimated interaction neighborhood of neuron $i\in F,$ at accuracy $\epsilon ,$ given the sample $X_1 (F) , \ldots , X_n (F) ,$  is  defined as
\begin{equation}\label{eq:estimator}
\hat{V}^{(\epsilon)}_{(i,n)}=\{j\in F\setminus\{i\}: \Delta_{(i,n)}(j)>\epsilon\}.
\end{equation}
\end{definition}


\subsection{Consistency of the selection procedure for finite and fully observed interaction neighborhoods}
\label{sec:2.3} 
To ensure the existence of our process we impose the 
following conditions. 

\begin{assumption}\label{ass:1}
Suppose that 
$$ r 
= \sup_{i\in I} \sum_{j\in I } |W_{j\to i } |< \infty .
$$ 
\end{assumption}

\begin{assumption}\label{ass:2}
Suppose that  $ g (t) = \sup_{ j \in I } g_j ( t) < \infty $ for all $t \geq 1 .$ 
\end{assumption}

We define the set  $\Omega^{adm} $  of \textit{admissible pasts} as follows  
\begin{equation}
\label{def:adm_set}
\Omega^{adm}=\left\{x  \in \{0,1\}^{I\times \{\ldots,-1,0\}}: \forall \ i\in I, \ \exists \ \ell_i\leq 0 \ \mbox{with} \  x_{\ell_i}(i)=1 \right\} .
\end{equation}
Observe that if $X_{-\infty}^{0}=x\in \Omega^{adm} $, then $L^i_0>-\infty$ for all $i\in I$.
Therefore, Assumptions \ref{ass:1} and \ref{ass:2}  assure that for each $i\in I$,
$$
\sum_{j\in I}W_{j\to i}\sum_{s=L_{0}^i+1}^{0} g_j(1 -s)X_s(j)<\infty, 
$$ 
which implies that the transition probability $\P(X_{1}(i)=1|X^{0}_{-\infty}=x)$ is well-defined. By induction, for each $t\geq 0$, the transition probabilities \eqref{def:2} are also well-fined. Thus, the existence of the stochastic chain $(X_t)_{t \in \Z }$, starting from  $X_{-\infty}^{0}=x\in \Omega^{adm} $, follows immediately.
Observe that  we do not assume stationarity of the chain. To prove the consistency of our estimator we impose also

\begin{assumption}\label{ass:4}
For all $ i \in I, $  $\varphi_i \in C^1 ( \R, [0, 1]  ) $ is a strictly increasing function. Moreover, there exists a $p_{* } \in ]0, 1 [ $ such that for all $i\in I$ and $u\in \R$
$$
p_* \leq \varphi_i(u)\leq 1-p_{*}.
$$  
\end{assumption}
Define for $i\in I$,
\begin{equation}
\label{def:Kil}
K_{i}=\left[\ \sum_{j\in V^{-}_i}W_{j\to i}g_j(1),\sum_{j\in V^{+}_i}W_{j\to i}g_j(1)\right],
\end{equation}
where $V^{+}_i=\{j\in V_i: W_{j\to i}>0\}$ and $V^{-}_i=\{j\in V_i: W_{j\to i}<0\}.$ 

Notice that under Assumptions \ref{ass:1} and \ref{ass:2},  this interval is always bounded. Finally, we define
\begin{equation}\label{eq:mi}
m_i=\inf_{u\in K_{i}}\left\{\varphi'_i(u)\right\}\inf_{j\in V_i}\left\{|W_{j\to i}|g_j(1)\right\}.
\end{equation}

The following theorem is our first main result. It states the strong consistency of the interaction neighborhood estimator when $V_i \subset F  $.  
By strong consistency we mean that the estimated interaction neighborhood of a fixed 
neuron $i$ equals $V_i$ eventually almost surely as $n\to \infty.$ 

\begin{thm}\label{thm:1}
Let $F \subset I$ be a finite set and  $X_1 (F) ,\ldots, X_n(F) $ be a sample produced by a the stochastic chain $(X_t)_{t\in\Z}$ compatible with \eqref{def:1} and \eqref{def:2}, starting from $X^{0}_{-\infty}=x$ for some fixed $x\in\Omega^{adm} .$ Under Assumptions \ref{ass:1}--\ref{ass:4}, for any $i \in F$ such that $V_i  \subset F,$  the following holds.   
\\
1. {\bf (Overestimation).} \label{thm:1I} For any $j\in F \setminus V_i$, we have that for any $\epsilon>0,$
$$
\P\Big(j\in \hat{V}^{(\epsilon)}_{(i,n)}\Big)\leq  4n^{3/2-\xi}\exp\left\{-\frac{\epsilon^2 n^{2\xi}}{2}\right\}.
$$
2. {\bf (Underestimation).} \label{thm:1II} 
The quantity $m_i$ defined in \eqref{eq:mi} satisfies $ m_i > 0 ,$ and  for any $j\in V_i$ and $0<\epsilon<m_i$,
$$
\P\left(j\notin \hat{V}^{(\epsilon)}_{(i,n)}\right)\leq 4\exp\left\{-\frac{(m_{i}-\epsilon)^2 n^{2\xi}}{2}\right\}+\exp\left\{-O\left(n^{1/2+\xi}\right)\right\}.  
$$
3. In particular, if we choose  $ \epsilon_n = O (n^{ - \xi/2} ),$ where $ \xi $ is the parameter appearing in \eqref{eq:xi}, then
$$  \hat{V}^{(\epsilon_n)}_{(i,n)} =V_i  \mbox{ eventually almost surely.}$$
\end{thm}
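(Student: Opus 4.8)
The plan is to deduce the almost-sure consistency directly from the exponential bounds of Parts 1 and 2 by a Borel--Cantelli argument, so no new probabilistic input is needed. First I would fix the scaling $\epsilon_n = c_0 n^{-\xi/2}$ for an arbitrary constant $c_0>0$ (this is the meaning of $\epsilon_n=O(n^{-\xi/2})$ used below) and record the inclusion
$$\{\hat{V}^{(\epsilon_n)}_{(i,n)}\neq V_i\}\subset \bigcup_{j\in F\setminus V_i}\{j\in\hat{V}^{(\epsilon_n)}_{(i,n)}\}\;\cup\;\bigcup_{j\in V_i}\{j\notin\hat{V}^{(\epsilon_n)}_{(i,n)}\}.$$
Indeed the estimator equals $V_i$ exactly when there is neither a false inclusion of some $j\notin V_i$ nor a false exclusion of some $j\in V_i$. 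Since $F$ is finite, it is enough to bound each of these finitely many terms and show that the resulting bound on $\P(\hat{V}^{(\epsilon_n)}_{(i,n)}\neq V_i)$ is summable in $n$.

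For the overestimation terms I substitute $\epsilon_n=c_0 n^{-\xi/2}$ into Part 1, giving $\epsilon_n^2 n^{2\xi}=c_0^2 n^{\xi}$ and hence the bound $4n^{3/2-\xi}\exp\{-c_0^2 n^{\xi}/2\}$; because $\xi>0$, the stretched-exponential factor overwhelms the polynomial prefactor and the associated series converges. For the underestimation terms I first note that, since $m_i>0$ is fixed and $\epsilon_n\to 0$, the admissibility requirement $0<\epsilon_n<m_i$ of Part 2 holds for all large $n$ and moreover $(m_i-\epsilon_n)^2\geq m_i^2/2$ eventually; substituting into Part 2 then yields, for large $n$, a bound of the form $4\exp\{-m_i^2 n^{2\xi}/4\}+\exp\{-O(n^{1/2+\xi})\}$, whose two summands both decay super-polynomially and are therefore summable.

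Summing the two families of bounds over the finite set $F$ gives $\sum_{n}\P(\hat{V}^{(\epsilon_n)}_{(i,n)}\neq V_i)<\infty$, and the Borel--Cantelli lemma then shows that the error event occurs for at most finitely many $n$ almost surely, i.e.\ $\hat{V}^{(\epsilon_n)}_{(i,n)}=V_i$ eventually almost surely. The only delicate point, and the reason for the specific rate $n^{-\xi/2}$, is that the threshold must be driven to zero fast enough for the underestimation bound (which needs $\epsilon_n\to 0$ to keep $(m_i-\epsilon_n)^2$ bounded away from $0$) yet slowly enough for the overestimation bound (which needs $\epsilon_n^2 n^{2\xi}$ to grow faster than $\log n$ so as to beat the $n^{3/2-\xi}$ prefactor). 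The choice $\epsilon_n\asymp n^{-\xi/2}$ is precisely the scaling that reconciles these two competing demands, and checking this balance is the crux of the argument.
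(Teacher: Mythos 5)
There is a genuine gap: you have proved only Item 3 of the theorem, while treating Items 1 and 2 --- which are themselves part of the statement --- as given inputs. Those two items carry essentially all of the probabilistic content, and nothing in your proposal addresses them. For the overestimation bound (Item 1) the paper must control $\max_{w}\max_{v\in\mathcal{T}^{w,j}_{(i,n)}}|\hat{p}_{(i,n)}(1|w)-\hat{p}_{(i,n)}(1|v)|$; this requires (i) the observation that for $j\notin V_i$ and $V_i\subset F$ any admissible pair $(w,v)$ satisfies $p_i(1|w)=p_i(1|v)$, so the triangle inequality reduces the problem to deviations of $\hat{p}_{(i,n)}(1|u)$ from $p_i(1|u)$; (ii) the cardinality bound $|\mathcal{T}_{(i,n)}|\leq n^{1/2-\xi}$ coming from $\sum_w N_{(i,n)}(w)\le n$; and (iii) a deviation inequality for $\hat{p}_{(i,n)}(1|u)-p_i(1|u)$, which is nontrivial because the relevant counts are taken along random (history-dependent) times --- the paper devises a conditional Hoeffding-type inequality (Propositions \ref{prop:1} and \ref{prop:2}) for the martingale-like quantity $M_{(i,t)}(w)=N_{(i,t)}(w,1)-p_i(1|w)N_{(i,t)}(w)$ precisely for this purpose. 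For the underestimation bound (Item 2) one must first prove the claim $m_i>0$ and, more importantly, that $m_i$ actually lower-bounds a detectable signal: the paper's Lemma \ref{lem:4} uses the $C^1$ and strict monotonicity hypotheses on $\varphi_i$ (Assumption \ref{ass:4}) to produce configurations $w^*,v^*$ differing only at coordinate $j$ with $|p_i(1|w^*)-p_i(1|v^*)|\geq m_i$, and Lemma \ref{lem:N_n_notsmal} shows that both $N_{(i,n)}(w^*)$ and $N_{(i,n)}(v^*)$ exceed $n^{1/2+\xi}$ up to a probability $\exp\{-O(n^{1/2+\xi})\}$, so that $w^*,v^*$ indeed lie in $\mathcal{T}_{(i,n)}$ and the empirical difference is visible to the estimator. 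None of this machinery can be bypassed by the Borel--Cantelli step.

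The part you did write out --- deducing Item 3 from Items 1 and 2 --- is correct and coincides with the paper's own argument: decompose $\{\hat{V}^{(\epsilon_n)}_{(i,n)}\neq V_i\}$ into the overestimation and underestimation events, sum the bounds over the finite set $F$, check summability in $n$, and invoke Borel--Cantelli. Your remark on the scaling is also sound, and in fact slightly sharper than the paper's phrasing: writing $\epsilon_n=O(n^{-\xi/2})$ is loose, since an $\epsilon_n$ decaying too fast would destroy the overestimation bound; your reading $\epsilon_n\asymp n^{-\xi/2}$ (or more generally $\epsilon_n\to 0$ with $\epsilon_n^2 n^{2\xi}/\log n\to\infty$) is what the argument genuinely needs. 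But as a proof of Theorem \ref{thm:1} as stated, the proposal is incomplete: it proves the corollary, not the theorem.
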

The proof of Theorem \ref{thm:1} is given in Section \ref{sec:proof.finite.case}.

\subsection{Extension to the case of partially observed interaction neighborhoods}
We now discuss the case when $V_i $ is not fully included in the sampling region $F,$ in particular, the case when $V_i $ is infinite. In this case, we also impose the following assumptions.

\begin{assumption}\label{ass:6}
$\gamma=\sup_{j\in I}  \|\varphi_j' \|_\infty < \infty .$
\end{assumption}

\begin{assumption}\label{ass:5}
There exists a constant $C$ and $p \geq 1 , $ such that $ g (t) = \sup_{j \in I} g_j (t)   \le C ( 1 + t^p ) $ for all $t \geq 1 . $  
\end{assumption}

Let $\ell_\infty = \{ \xi = (\xi_j)_{j \in I} :  \ \forall j\in I, \xi_j \in \R \ \mbox{and} \   \| \xi\|_\infty := \sup_{j\in I} | \xi_j | < \infty \}  $ be the space of all bounded series of real numbers indexed by $I.$ Under Assumption \ref{ass:5}, we may introduce, for each $t\geq 1$, the continuous operator  $ H (t) : \ell_\infty \to \ell_\infty $ defined by $ (H(t) \xi)_j  = \sum_{k\in I} H_{j, k } (t) \xi_k , $ for all $ j \in I,$ where
\begin{equation}\label{eq:H} H_{j, k } (t) := \gamma | W_{k \to j }| g_k  (t ) , \mbox{ for } j \neq i , 
\end{equation}
and, for $p_* $ as in Assumption \ref{ass:4},
$$ H_{i, i } (t) := ( 1 - p_*) 1_{ \{ t = 1 \}}.$$

By our assumptions, the norm of the operator $H (t)$ defined by 
$$ \|\! | H (t) \|\!| = \sup \{   \| H \xi \|_\infty  : \xi \in \ell_\infty , \| \xi \|_\infty = 1 \}$$ 
satisfies
$$ \|\! | H (t) \|\!| \le C \gamma r (1 + t^p) + (1 - p_*) 1_{ \{ t= 1\} }.$$
Then for any $ \alpha > 0 , $ the linear operator  
$$ \Lambda  (\alpha ) = \sum_{ t=1}^\infty  e^{ - \alpha t } H (t) $$
is well-defined and continuous as well. In particular,  there exists $\alpha_0 \geq 0$ such that 
\begin{equation}\label{eq:alpha_0}
\| \!| \Lambda ( \alpha_0) \|\!| < 1 .
\end{equation}

We are now ready to state our second main result. It gives precise error bounds for the interaction neighborhood estimator when $V_i $ is not fully observed. These error bounds depend on the tail of the series 
\begin{equation}\label{eq:tail}
\Sigma_i (F) := \sum_{ j  \notin V_i \cap F} | W_{ j \to i } | .
\end{equation}
To state the theorem we shall also need the definitions
\begin{equation*}
K^{[F]}_{i}=\left[\ \sum_{j\in V^{-}_i \cap F}W_{j\to i}g_j(1),\sum_{j\in V^{+}_i \cap F}W_{j\to i}g_j(1)\right] 
\end{equation*}
and 
$$ m^{[F]}_i=\inf_{u\in K^{[F]}_{i}}\left\{\varphi'_i(u)\right\}\inf_{j\in V_i \cap F}\left\{|W_{j\to i}|g_j(1)\right\}.$$

\begin{thm}\label{thm:main2}

Let $F \subset I$ be a finite set and  $X_1 (F) ,\ldots, X_n(F) $ be a sample produced by a the stochastic chain $(X_t)_{t\in\Z}$ compatible with \eqref{def:1} and \eqref{def:2}, starting from $X^{0}_{-\infty}=x$ for some fixed $x\in\Omega^{adm} .$ Under Assumptions \ref{ass:1}--\ref{ass:5}, for any $i \in F$ such that $ V_i \cap F \neq \emptyset,$ the following assertions hold true. 
\\
1. {\bf (Overestimation).} \label{thm:1I} For any $j \in F \setminus V_i$, we have that for any $\epsilon>0,$ 
$$
\P\Big(j\in \hat{V}^{(\epsilon)}_{(i,n)}\Big)\leq  4n^{3/2-\xi}\exp\left\{-\frac{\epsilon^2 n^{2\xi}}{2}\right\} + C  (e^{ \alpha_0  n } \vee n ) \Sigma_i (F).
$$
 \\
2. {\bf (Underestimation).} \label{thm:1II} 
We have that $m^{[F]}_i>0 ,$ and for any $j\in V_i \cap F $ and $0<\epsilon<m^{[F]}_i$,
\begin{multline*}
\P\left(j\notin \hat{V}^{(\epsilon)}_{(i,n)}\right)\leq 4\exp\left\{-\frac{(m^{[F]}_{i}-\epsilon)^2 n^{2\xi}}{2}\right\}+\exp\left\{-O\left(n^{1/2+\xi}\right)\right\} \\
+ C  (e^{ \alpha_0  n } \vee n ) \Sigma_i (F) .
  \end{multline*}
\end{thm}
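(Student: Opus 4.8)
The plan is to reduce the partially observed problem to the fully observed situation already covered by Theorem \ref{thm:1}, by comparing the true chain $(X_t)_{t\in\Z}$ with a \emph{truncated} chain whose presynaptic interactions are confined to $F$, and then paying for this comparison with the coupling cost supplied by Proposition \ref{thm:couplage}. Concretely, I would first define the truncated synaptic weights $W^{[F]}_{j\to i} := W_{j\to i}\,\one\{j\in F\}$ and let $(X^{[F]}_t)_{t\in\Z}$ be a chain compatible with \eqref{def:1}--\eqref{def:2} driven by these weights and started from the \emph{same} admissible past $x\in\Omega^{adm}$. Since truncation only deletes terms from the sums in \eqref{def:2}, the quantities $r$ and $g$ of Assumptions \ref{ass:1}--\ref{ass:2} and the bounds on $\varphi_i$ in Assumption \ref{ass:4} are inherited, so $(X^{[F]}_t)$ is a well-defined chain of the same type. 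By construction its interaction neighborhood of neuron $i$ is exactly $V_i\cap F\subset F$, so Theorem \ref{thm:1} applies to $(X^{[F]}_t)$ verbatim, with $K_i$ and $m_i$ replaced by $K^{[F]}_i$ and $m^{[F]}_i$. The positivity $m^{[F]}_i>0$ follows exactly as in the proof of the analogous claim in Theorem \ref{thm:1}: under Assumption \ref{ass:4} the derivative $\varphi_i'$ is continuous and strictly positive, hence bounded below by a positive constant on the compact interval $K^{[F]}_i$; and since $F$ is finite while $V_i\cap F\neq\emptyset$, the factor $\inf_{j\in V_i\cap F}\{|W_{j\to i}|g_j(1)\}$ is a minimum over a nonempty finite set of strictly positive numbers, hence positive.

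The bridge between the two chains rests on the observation that the estimator \eqref{eq:estimator} is a \emph{deterministic function of the observed sample} $(X_t(F))_{1\le t\le n}$ alone; write $\hat V^{(\epsilon)}_{(i,n)}[y]$ for this functional evaluated at a sample $y$. Every index $t-\ell-1,\dots,t$ appearing in the counters $N_{(i,n)}$ lies in $\{1,\dots,n\}$, so on the event
\[
A_n := \{\, X_t(F)=X^{[F]}_t(F)\ \text{ for all } 1\le t\le n \,\}
\]
one has $\hat V^{(\epsilon)}_{(i,n)}[X]=\hat V^{(\epsilon)}_{(i,n)}[X^{[F]}]$. A union bound then gives, for any $j$,
\[
\P\big(j\in \hat V^{(\epsilon)}_{(i,n)}[X]\big)\le \P\big(j\in \hat V^{(\epsilon)}_{(i,n)}[X^{[F]}]\big)+\P(A_n^c),
\]
and likewise with $\in$ replaced by $\notin$. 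For $j\in F\setminus V_i$ we have $j\in F\setminus(V_i\cap F)$, so the overestimation bound of Theorem \ref{thm:1} controls the first term by $4n^{3/2-\xi}\exp\{-\epsilon^2 n^{2\xi}/2\}$; for $j\in V_i\cap F$ and $0<\epsilon<m^{[F]}_i$ the underestimation bound gives $4\exp\{-(m^{[F]}_i-\epsilon)^2 n^{2\xi}/2\}+\exp\{-O(n^{1/2+\xi})\}$.

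It remains to bound $\P(A_n^c)$, and this is exactly where Proposition \ref{thm:couplage} is invoked: coupling $(X_t)$ and $(X^{[F]}_t)$ from the common past $x$, a discrepancy between them on $F\times\{1,\dots,n\}$ can only be created through the presynaptic inputs coming from neurons outside $F$, and it propagates through the dynamics according to the operator $H(t)$; summing these contributions against $\Lambda(\alpha_0)$ yields $\P(A_n^c)\le C(e^{\alpha_0 n}\vee n)\,\Sigma_i(F)$. Substituting this into the two displays above gives the two asserted bounds. I expect the genuine difficulty to reside entirely in the coupling estimate of Proposition \ref{thm:couplage}: one must show that truncating (possibly infinitely many) unobserved presynaptic inputs perturbs the trajectory of the observed coordinates by an amount that stays \emph{linear} in the tail weight $\Sigma_i(F)$ and uniformly controlled over the whole time window, which is precisely where the Lipschitz bound $\gamma=\sup_j\|\varphi_j'\|_\infty$ of Assumption \ref{ass:6}, the polynomial growth of $g$ in Assumption \ref{ass:5}, and the contraction $\| \!|\Lambda(\alpha_0)\|\!|<1$ of \eqref{eq:alpha_0} all enter.
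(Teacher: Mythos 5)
Your proposal is correct and follows essentially the same route as the paper: the paper likewise couples $X$ with the fixed-range approximation $X^{[F]}$ (truncating only neuron $i$'s presynaptic input to $V_i \cap F$), reruns the proof of Theorem \ref{thm:1} with $m_i$ replaced by $m_i^{[F]}$ on the agreement event, and pays $\P(E_n^c) \le C (e^{\alpha_0 n} \vee n)\, \Sigma_i(F)$ via Proposition \ref{thm:couplage}. The only cosmetic difference is that the paper's agreement event $E_n$ involves neuron $i$ alone while your $A_n$ requires agreement on all of $F$; since in this coupling discrepancies can only originate at neuron $i$, the two events coincide (and in any case your implicit union bound over $j \in F$ only costs a factor $|F|$ absorbed into $C$).
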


The proof of Theorem \ref{thm:main2} is given in Section \ref{sec:proofs.infinite.case}.

\section{Exponential inequalities}
\label{sec:dev.ineq}
To prove Theorems \ref{thm:1} ans \ref{thm:main2} we need some exponential inequalities, including a new conditional Hoeffding-type inequality, stated in Proposition \ref{prop:1} below, which is interesting by itself. 

For each $\ell\geq 1,$ $F\subset I$ finite and $w\in \{0, 1 \}^{ \{ - \ell, \ldots, - 1 \} \times F \setminus \{i\} }$, we write
\begin{equation}\label{def:transi_prob_F}
p_i ( 1 | w ) =\P(X_0(i)=1|X^{-1}_{-\ell-1}(i)=10^{\ell},X^{-1}_{-\ell}(F \setminus \{i\} )=w).
\end{equation} 
By homogeneity of  the transition probability \eqref{def:2}, this implies that for any $t\in \Z$, 
$$
p_i ( 1 | w )=\P(X_t(i)=1|X^{t-1}_{t-\ell-1}(i)=10^{\ell},X^{t-1}_{t-\ell}(F \setminus \{i\})=w).
$$
Moreover, we also have that $p_i ( 1 | w )=p_i(1|w(V_i))$ for any set $F\supset V_i$, where $w(V_i)$ is the configuration $w$ restricted to the set $V_i$.

\begin{proposition}
\label{prop:1}
Suppose that $V_i$ is finite and $ V_i \subset F .$ Then for any $\ell\geq 1$, $w\in \{0, 1 \}^{ \{ - \ell, \ldots, - 1 \} \times F\setminus\{i\} } $, $\lambda>0$ and all $t> \ell +1$, 
\begin{equation}
\P(|M_{(i,t)}(w)|> \lambda)\leq 2\exp\left\{-\frac{2 \lambda ^2}{t-\ell +1}\right\}\P(N_{(i,t)}(w)>0),
\end{equation}
where $M_{(i,t)}(w):=N_{(i,t)}(w,1)-p_i(1|w)N_{(i,t)}(w)$.  
\end{proposition}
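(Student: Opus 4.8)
The plan is to recognize $M_{(i,t)}(w)$ as a sum of martingale increments adapted to the filtration $(\cF_t)$ and to apply the Azuma--Hoeffding inequality conditionally on the event $\{N_{(i,t)}(w)>0\}$. First I would rewrite
$$
M_{(i,t)}(w) = \sum_{s=\ell+2}^{t} \one{\{X^{s-1}_{s-\ell-1}(i)=10^\ell,\; X^{s-1}_{s-\ell}(\Fni)=w\}}\bigl(X_s(i)-p_i(1|w)\bigr).
$$
The key observation is that the indicator multiplying the $s$-th term is $\cF_{s-1}$-measurable, since it depends only on coordinates up to time $s-1$, whereas $X_s(i)$ has conditional mean $p_i(1|w)$ given $\cF_{s-1}$ precisely on that event, by the definition \eqref{def:transi_prob_F} of $p_i(1|w)$ together with the homogeneity of the transition probabilities. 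Hence each summand, call it $D_s$, satisfies $\E[D_s \mid \cF_{s-1}]=0$, so $(M_{(i,s)}(w))_{s}$ is a martingale with increments bounded in absolute value by $1$ (the indicator is $0$ or $1$ and $X_s(i)-p_i(1|w)\in[-1,1]$).

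The obstacle is that a direct Azuma bound would give $\exp\{-2\lambda^2/(t-\ell)\}$ unconditionally, but we want the sharper statement in which the probability is multiplied by $\P(N_{(i,t)}(w)>0)$; this conditional sharpening is the genuinely new ingredient of the proposition. To obtain it I would condition on a suitable event. Let $\tau$ be the first time $s\ge \ell+2$ at which the indicator in $D_s$ equals $1$ (the first occurrence of the pattern $10^\ell$ for $i$ followed by $w$ on $\Fni$), with $\tau=\infty$ if no such time occurs in $\{\ell+2,\dots,t\}$. On $\{N_{(i,t)}(w)=0\}$ we have $M_{(i,t)}(w)=0$, so $\{|M_{(i,t)}(w)|>\lambda\}\subset\{N_{(i,t)}(w)>0\}=\{\tau\le t\}$. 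The plan is therefore to decompose
$$
\P(|M_{(i,t)}(w)|>\lambda) = \sum_{r=\ell+2}^{t}\P\bigl(|M_{(i,t)}(w)|>\lambda,\; \tau=r\bigr),
$$
and on each event $\{\tau=r\}$ restart the martingale from time $r$. Since $D_s=0$ for $s<\tau$, we have $M_{(i,t)}(w)=\sum_{s=\tau}^{t}D_s$, a sum of at most $t-r+1\le t-\ell$ increments; applying Azuma--Hoeffding to the martingale $(\sum_{s=r}^{u}D_s)_{u\ge r}$ conditioned on $\cF_{r-1}$ on the stopping-time event gives, increment-count being at most $t-\ell+1$,
$$
\P\bigl(|M_{(i,t)}(w)|>\lambda \mid \{\tau=r\}\bigr)\le 2\exp\Bigl\{-\tfrac{2\lambda^2}{t-\ell+1}\Bigr\}.
$$

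Summing over $r$ and using $\sum_{r}\P(\tau=r)=\P(\tau\le t)=\P(N_{(i,t)}(w)>0)$ then yields the claimed bound. The delicate point I expect to be the main obstacle is making the conditional Azuma argument rigorous across the random starting index $\tau$: one must verify that after restarting, the remaining increments still form a martingale difference sequence with the correct bounded range, and that the number of increments is uniformly controlled by $t-\ell+1$ regardless of $r$, so that the exponential rate does not depend on $r$ and factors cleanly out of the sum. A clean way to handle this is to note that conditioning on $\{\tau=r\}$ preserves the martingale-difference property of $(D_s)_{s>r}$ relative to $(\cF_s)_{s\ge r}$ (by the optional-stopping/strong-Markov-type structure and the fact that $p_i(1|w)$ is the conditional mean on the relevant event), after which the standard Hoeffding lemma and exponential Markov inequality apply verbatim.
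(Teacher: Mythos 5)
Your proposal is correct and yields exactly the stated bound, but it is organized differently from the paper's proof, so a comparison is worth recording. The paper never conditions on a random time: writing $M_t=M_{t-1}+\chi_t Y_t$ with $\chi_t=\one\{X_{t-\ell}^{t-1}(\Fni)=w,\,X_{t-\ell-1}^{t-1}(i)=10^\ell\}$ and $Y_t=\one\{X_t(i)=1\}-p$, it applies the exponential Markov inequality to $\E\big[e^{\sigma M_t}\one\{N_t>0\}\big]$, uses the disjoint splitting $\{N_t>0\}=\{N_{t-1}>0\}\cup\{N_{t-1}=0,\chi_t=1\}$ to set up a recursion in $t$, bounds each step by Hoeffding's lemma $\E\big[e^{\sigma \chi_t Y_t}\,\big|\,\cF_{t-1}\big]\le e^{\sigma^2/8}$, and iterates via the identity $\one\{N_t>0\}=\sum_{s=\ell+2}^t\one\{N_{s-1}=0,\chi_s=1\}$ to get $\E\big[e^{\sigma M_t}\one\{N_t>0\}\big]\le e^{(t-\ell+1)\sigma^2/8}\,\P(N_t>0)$, and finally optimizes $\sigma$. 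Observe that the paper's identity is precisely your first-occurrence decomposition, since $\{N_{s-1}=0,\chi_s=1\}=\{\tau=s\}$; so the combinatorial core of both proofs is the same, and both ultimately rest on the per-increment Hoeffding lemma. What differs is the packaging: you condition on $\{\tau=r\}$ and invoke Azuma--Hoeffding as a black box on each piece, while the paper threads the indicator through an MGF recursion and never has to discuss conditional laws.

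Two points in your write-up deserve tightening. First, the reason conditioning on $\{\tau=r\}$ preserves the martingale-difference property is not an ``optional-stopping/strong-Markov'' argument but plain measurability: each $\chi_s$ is $\cF_{s-1}$-measurable, hence $\{\tau=r\}\in\cF_{r-1}\subset\cF_{s-1}$ for every $s\ge r$, so for any $A\in\cF_{s-1}$ one has $\E\big[D_s\one_A\one\{\tau=r\}\big]=\E\big[\E[D_s\,|\,\cF_{s-1}]\,\one_{A\cap\{\tau=r\}}\big]=0$; thus under $\P(\cdot\,|\,\tau=r)$ the increments $(D_s)_{s\ge r}$ remain centered given $\cF_{s-1}$ and still take values in $[-p,1-p]$, and since there are at most $t-r+1\le t-\ell-1\le t-\ell+1$ of them, the conditional Azuma--Hoeffding bound and the summation over $r$ go through exactly as you describe. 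Second, the centering $\E[D_s\,|\,\cF_{s-1}]=0$ uses the hypothesis $V_i\subset F$ in an essential way: only then is the conditional spiking probability given the \emph{full} past, on the event $\{\chi_s=1\}$, almost surely equal to the constant $p_i(1|w)=p_i(1|w(V_i))$; without it, $p_i(1|w)$ is only an average and the increments need not be centered. With these two clarifications your route is rigorous; what it buys is conceptual transparency (the factor $\P(N_t>0)$ appears visibly as the mass of $\{\tau\le t\}$), while the paper's recursion buys self-containedness, avoiding any conditioning on random times.
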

\begin{proof}
We denote $p=p_i(1|w)$ and for each $t\geq \ell+1,$ $N_{(i,t)}(w)=N_t$, $Y_t=\one\{X_t(i)=1\}-p$, $\chi_t=\one\{X_{t-\ell}^{t-1}(F\setminus\{i\})=w, X^{t-1}_{t-\ell-1}(i)=10^{\ell}\}$ and also $M_{(i,t)}(w)=M_t$ with the convention that $M_{\ell+1}=0.$ Thus for $t\geq \ell+2$,
\begin{equation}
\label{decomp.}
M_t=M_{t-1}+\chi_t Y_t.
\end{equation}
Since $\P(M_t> \lambda)=\P(M_t> \lambda, N_{t}>0)$, the Markov inequality implies that 
$$
\P(M_t> \lambda)\leq e^{-\lambda\sigma}E\big[e^{\sigma M_t}\one\{N_t>0\}\big],
$$
for all $\sigma>0$. Notice that $\{N_t>0\}=\{N_{t-1}>0\}\cup \{N_{t-1}=0,\chi_t=1\}$, so that by \eqref{decomp.}, it follows that $\E\big[e^{\sigma M_t}\one\{N_t>0\}\big]$ can be rewritten as 
\begin{equation}
\label{rewriting_Mn}
\E\big[e^{\sigma M_{t-1}}e^{\sigma\chi_t Y_t}\one\{N_{t-1}>0\}\big]+
\E\big[e^{\sigma Y_t}\one\{N_{t-1}=0,\chi_t=1\}\big].
\end{equation}
From the assumption $V_i\subset F$ it follows that $p=p_i(1|w)=p_i(1|w(V_i))$ and $\E\big[\chi_t Y_t\big|\cF_{t-1}]=0$. Since $- p \le \chi_t Y_t \leq 1 - p$, the classical Hoeffding bound implies that $\E\big[e^{\sigma\chi_t Y_t}\big|\cF_{t-1}]\leq e^{\sigma^2/8}$ and therefore the expression \eqref{rewriting_Mn}  can be bounded above by
$$
\E\big[e^{\sigma M_t}\one_{\{N_t>0\}}\big]\leq e^{\sigma^2/8}\E\big[e^{\sigma M_{t-1}}\one_{\{N_{t-1}>0\}}\big]+
e^{\sigma^2/8}\E\big[\one_{\{N_{t-1}=0,\chi_t=1\}}\big].
$$  
By iterating the inequality above and using the identity 
$$
\one\{N_{t}>0\}=\one\{N_{\ell+1}>0\}+ \sum_{s=\ell+2}^t\one\{N_{s-1}=0,\chi_s=1\},
$$
we obtain that $\E\big[e^{\sigma M_t}\one\{N_t>0\}\big]\leq e^{(t-\ell+1)\sigma^2/8}\P(N_t>0)$. 
Thus, collecting all these estimates, we deduce, by taking $\sigma=4\lambda (t-\ell+1)^{-1}$, that 
\begin{equation*}
\P(M_t> \lambda)\leq \exp\left\{-\frac{2\lambda ^2}{t-\ell +1}\right\}\P(N_t>0).
\end{equation*}
The left-tail probability $\P(M_t< -\lambda)$ is treated likewise. 
\end{proof}

As a consequence of Proposition \ref{prop:1}, we have the following result.
\begin{proposition}
\label{prop:2}
Suppose that $V_i$ is finite and $ V_i \subset F .$ Then for any $\ell\geq 1$, $t> \ell + 1$, $w\in\{0, 1 \}^{ \{ - \ell, \ldots, - 1 \} \times F\setminus\{i\} } $,  $\xi\in(0,1/2)$ and $\epsilon>0$, we have
\begin{multline*}
\P\Big(|\hat{p}_{(i,t)}(1|w)-p_i(1|w)|>\epsilon,N_{(i,t)}(w)\geq t^{1/2+\xi}\Big) \\
\leq 2\exp\left\{-2\epsilon^2 t^{2\xi}\right\} \P (N_{(i,t)}(w) > 0 ).
\end{multline*}
\end{proposition}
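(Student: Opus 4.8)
The plan is to reduce the statement directly to Proposition~\ref{prop:1} by rewriting the centered empirical probability as a normalized version of the centered count $M_{(i,t)}(w)$. First I would observe that on the event $\{N_{(i,t)}(w) > 0\}$ the empirical probability $\hat{p}_{(i,t)}(1|w) = N_{(i,t)}(w,1)/N_{(i,t)}(w)$ is well-defined and satisfies the algebraic identity
$$
\hat{p}_{(i,t)}(1|w) - p_i(1|w) = \frac{N_{(i,t)}(w,1) - p_i(1|w) N_{(i,t)}(w)}{N_{(i,t)}(w)} = \frac{M_{(i,t)}(w)}{N_{(i,t)}(w)},
$$
using the definition $M_{(i,t)}(w) = N_{(i,t)}(w,1) - p_i(1|w) N_{(i,t)}(w)$ from Proposition~\ref{prop:1}. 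This is the only place where the restriction to $\{N_{(i,t)}(w)>0\}$ matters, and it is automatically guaranteed on the event under consideration since $t^{1/2+\xi} > 0$.

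Next I would use this identity to show that the event in question is contained in a deviation event for $M_{(i,t)}(w)$. On the event $\{|\hat{p}_{(i,t)}(1|w) - p_i(1|w)| > \epsilon,\ N_{(i,t)}(w) \geq t^{1/2+\xi}\}$ we have $|M_{(i,t)}(w)| = |\hat{p}_{(i,t)}(1|w) - p_i(1|w)|\, N_{(i,t)}(w) > \epsilon\, N_{(i,t)}(w) \geq \epsilon\, t^{1/2+\xi}$. Consequently this event is a subset of $\{|M_{(i,t)}(w)| > \epsilon\, t^{1/2+\xi}\}$, so I can apply Proposition~\ref{prop:1} with $\lambda = \epsilon\, t^{1/2 + \xi}$, which yields the upper bound $2\exp\{-2\epsilon^2 t^{1+2\xi}/(t - \ell + 1)\}\,\P(N_{(i,t)}(w) > 0)$.

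Finally I would simplify the exponent. Since $\ell \geq 1$ we have $t - \ell + 1 \leq t$, and therefore $2\epsilon^2 t^{1+2\xi}/(t-\ell+1) \geq 2\epsilon^2 t^{1+2\xi}/t = 2\epsilon^2 t^{2\xi}$, which gives the claimed bound. There is no serious obstacle here: the argument is essentially a one-line reduction, and the only point requiring care is the passage from the normalized deviation of $\hat{p}_{(i,t)}$ to the raw deviation of $M_{(i,t)}(w)$, where the lower bound $N_{(i,t)}(w) \geq t^{1/2+\xi}$ is exactly what converts the threshold $\epsilon$ into the effective threshold $\epsilon\, t^{1/2+\xi}$ and thereby produces the $t^{2\xi}$ rate in the exponent.
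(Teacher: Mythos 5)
Your proof is correct and follows essentially the same route as the paper: rewrite $\hat{p}_{(i,t)}(1|w)-p_i(1|w)$ as $M_{(i,t)}(w)/N_{(i,t)}(w)$, include the event into $\{|M_{(i,t)}(w)|>\epsilon t^{1/2+\xi}\}$, and invoke Proposition~\ref{prop:1} with $\lambda=\epsilon t^{1/2+\xi}$. Your only addition is to spell out the exponent simplification via $t-\ell+1\leq t$ (valid since $\ell\geq 1$), a step the paper leaves implicit.
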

\begin{proof}
For any $w\in\{0, 1 \}^{ \{ - \ell, \ldots, - 1 \} \times F\setminus\{i\} }  $ and $t>\ell + 1$, 
$$
M_t(w)=\big(\hat{p}_{(i,t)}(1|w)-p_i(1|w)\big)N_{(i,t)}(w),
$$ 
so that 
$$
\P\Big(|\hat{p}_{(i,t)}(1|w)-p_i(1|w)|>\epsilon,N_{(i,t)}(w)\geq t^{1/2+\xi}\Big)\leq \P(|M_t(w)|>\epsilon t^{1/2+\xi}).
$$
Thus the result follows from Proposition \ref{prop:1} by taking $\lambda =\epsilon t^{1/2+\xi}.$
\end{proof}

The next two results will be used to control the probability of underestimating $V_i .$ We start with a simple lower bound 
which follows immediately from Assumption \ref{ass:4}.
\begin{lemma}\label{lem:2}
For any fixed $i\in F$, $t > \ell+1 $ and  $w\in \{0, 1 \}^{\{-\ell, \ldots, - 1 \} \times F\setminus \{i\} } $, we define 
for $1 \leq s \leq t-\ell,$ 
$$
Z_s=\one\{X_{s}^{s+\ell}(i)=10^{\ell},
X_{s+1}^{s+\ell}(F\setminus\{i\})=w\}.
$$
Under Assumption \ref{ass:4}, it follows that
$$
\P\left(Z_s=1|\cF_{s-1}\right)\geq p_{min}^{|F|\ell+1},
$$
where $p_{min}=\min\{p_*,(1-p_*)\}>0$ with $p_*$ as in Assumption \ref{ass:4}. 
\end{lemma}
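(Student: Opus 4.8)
The plan is to write the event $\{Z_s = 1\}$ as an intersection of single–time–step events and to evaluate its conditional probability by peeling off one time step at a time, from the latest time $s+\ell$ back to $s$, via the tower property. For $s \le u \le s+\ell$ let $A_u \in \cF_u$ be the event fixing the configuration at time $u$ prescribed by $Z_s = 1$: namely $A_s = \{X_s(i)=1\}$, and for $s+1 \le u \le s+\ell$, $A_u$ fixes $X_u(i) = 0$ together with the $|F|-1$ values of $X_u(F\setminus\{i\})$ dictated by $w$. Then $\{Z_s = 1\} = \bigcap_{u=s}^{s+\ell} A_u$.

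The engine of the proof is a uniform lower bound on each one–step conditional probability $\P(A_u \mid \cF_{u-1})$. By the conditional independence across neurons in \eqref{def:1}, this probability factorizes over $j \in F$ into terms of the form $\P(X_u(j) = a \mid \cF_{u-1})$ with $a \in \{0,1\}$. For each such term, \eqref{def:2} together with Assumption \ref{ass:4} gives $\P(X_u(j)=1\mid\cF_{u-1}) = \varphi_j(\cdot) \in [p_*, 1-p_*]$ --- this holds in both cases of \eqref{def:2}, since $\varphi_j$ takes values in $[p_*,1-p_*]$ everywhere, in particular at $0$ --- and hence both $\P(X_u(j)=1\mid\cF_{u-1})$ and $\P(X_u(j)=0\mid\cF_{u-1})$ are at least $p_* \ge p_{min} > 0$. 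Consequently $\P(A_u\mid\cF_{u-1}) \ge p_{min}^{|F|}$ for $s+1 \le u \le s+\ell$, where all $|F|$ coordinates are pinned, while $\P(A_s\mid\cF_{s-1}) = \P(X_s(i)=1\mid\cF_{s-1}) \ge p_{min}$, where only neuron $i$ is pinned.

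It then remains to combine these one–step bounds. Since $A_u \in \cF_u$ and $\one_{A_s}, \ldots, \one_{A_{u-1}}$ are $\cF_{u-1}$–measurable, the tower property yields
\[
\P(Z_s = 1 \mid \cF_{s-1}) = \E\Big[\one_{A_s}\cdots\one_{A_{s+\ell-1}}\,\P(A_{s+\ell}\mid\cF_{s+\ell-1}) \ \Big| \ \cF_{s-1}\Big] \ge p_{min}^{|F|}\,\E\Big[\one_{A_s}\cdots\one_{A_{s+\ell-1}}\ \Big|\ \cF_{s-1}\Big],
\]
and iterating this step down to time $s$ --- the $\ell$ steps $u = s+\ell, \ldots, s+1$ each contributing a factor $p_{min}^{|F|}$, and the final step $u = s$ contributing $p_{min}$ --- produces $\P(Z_s = 1 \mid \cF_{s-1}) \ge (p_{min}^{|F|})^{\ell}\, p_{min} = p_{min}^{|F|\ell+1}$, as claimed.

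I do not expect a genuine obstacle here; the argument is a routine iterated conditioning. The one place demanding care is the asymmetry at time $s$: the event $Z_s$ constrains only neuron $i$ at time $s$ (not the whole of $F$), which is exactly what turns the naive count $|F|(\ell+1)$ into $|F|\ell + 1$ and hence fixes the exponent. A miscount there, or overlooking that the uniform $[p_*,1-p_*]$ bound also covers the reset case $\varphi_j(0)$ of \eqref{def:2}, would be the only ways to go wrong.
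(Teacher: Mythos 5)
Your proof is correct and is exactly the argument the paper has in mind: the paper states the lemma without proof, remarking only that it ``follows immediately from Assumption \ref{ass:4}'', and your iterated conditioning --- using the conditional independence across neurons in \eqref{def:1}, the uniform bound $\varphi_j(\cdot)\in[p_*,1-p_*]$ from Assumption \ref{ass:4} (including the reset case $\varphi_j(0)$), and the correct count of $|F|\ell+1$ pinned coordinates --- simply fills in those routine details. Nothing to flag.
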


\begin{lemma}\label{lem:N_n_notsmal}
Suppose Assumption \ref{ass:4}. For any $\xi\in(0,1/2)$, $i\in F$ and $w\in \{0, 1 \}^{\{ - 1 \} \times F\setminus \{i\} } ,$ it holds that
$$
\P\left(N_{(i,t)}(w)<t^{1/2+\xi}\right)\leq \exp\left\{-O\left(t^{1/2+\xi}\right)\right\}.
$$
\end{lemma}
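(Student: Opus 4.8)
The plan is to realise $N_{(i,t)}(w)$ as a sum of the indicators $Z_s$ appearing in Lemma \ref{lem:2} and then to deduce the bound from the conditional lower bound $\P(Z_s=1\mid\cF_{s-1})\ge p_{min}^{|F|+1}$ via a lower-tail Chernoff estimate. Since here $w\in\{0,1\}^{\{-1\}\times F\setminus\{i\}}$ we are in the case $\ell=1$, and reindexing by $s=s'-2$ gives
$$N_{(i,t)}(w)=\sum_{s=1}^{t-2}Z_s,\qquad Z_s=\one\{X_s(i)=1,\,X_{s+1}(i)=0,\,X_{s+1}(F\setminus\{i\})=w\},$$
where $Z_s$ is exactly the variable of Lemma \ref{lem:2} specialised to $\ell=1$.

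The only genuine difficulty is that consecutive $Z_s$ depend on the overlapping time windows $\{s,s+1\}$, so $Z_s$ is $\cF_{s+1}$-measurable rather than $\cF_{s-1}$-measurable and the full sum does not carry a clean supermartingale structure adapted to $(\cF_{s-1})$. First I would remove this obstacle by passing to a sub-sum over disjoint windows: retaining only odd indices and setting $m=\lfloor(t-1)/2\rfloor$, nonnegativity of the terms gives $N_{(i,t)}(w)\ge\sum_{k=1}^m Z_{2k-1}$. Writing $\mathcal{G}_k:=\cF_{2k}$, each $Z_{2k-1}$ is $\mathcal{G}_k$-measurable (it depends only on times $2k-1,2k$), while Lemma \ref{lem:2} yields $\P(Z_{2k-1}=1\mid\mathcal{G}_{k-1})\ge q$ with $q:=p_{min}^{|F|+1}>0$.

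The remaining step is a routine exponential-moment computation. For any $\theta>0$,
$$\E\big[e^{-\theta Z_{2k-1}}\mid\mathcal{G}_{k-1}\big]=1-(1-e^{-\theta})\,\P(Z_{2k-1}=1\mid\mathcal{G}_{k-1})\le 1-(1-e^{-\theta})q\le\exp\{-(1-e^{-\theta})q\},$$
so peeling off the terms one by one with the tower property gives $\E[e^{-\theta\sum_{k=1}^m Z_{2k-1}}]\le\exp\{-m(1-e^{-\theta})q\}$. A Markov inequality then yields, with $a=t^{1/2+\xi}$,
$$\P\big(N_{(i,t)}(w)<t^{1/2+\xi}\big)\le\P\Big(\sum_{k=1}^m Z_{2k-1}<a\Big)\le\exp\{\theta a-m(1-e^{-\theta})q\}.$$
Fixing $\theta=1$ and using $m\ge(t-2)/2$, the subtracted term is of order $t$ whereas $a=t^{1/2+\xi}=o(t)$ since $\xi<1/2$; hence for all large $t$ the exponent is at most $-c\,t^{1/2+\xi}$ for some $c>0$, which is the asserted bound $\exp\{-O(t^{1/2+\xi})\}$.

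I expect the block decomposition into non-overlapping windows to be the only conceptual point; once the odd-index sub-sum is isolated, the conditional Bernoulli-type lower bound of Lemma \ref{lem:2} feeds directly into the classical Chernoff argument and everything else is bookkeeping.
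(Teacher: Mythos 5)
Your proposal is correct and follows essentially the same route as the paper: the same reduction of $N_{(i,t)}(w)$ to the odd-indexed, non-overlapping variables $Z_{2k-1}$ drawn from Lemma \ref{lem:2} with $\ell=1$, the same filtration trick ($\mathcal{G}_k\subset\cF_{2k}$), and the same conditional lower bound $q=p_{min}^{|F|+1}$. The only difference is that where the paper invokes Lemma A.3 of \cite{CT} as a ready-made lower-tail inequality for conditionally lower-bounded Bernoulli sums, you derive that concentration step by hand via the exponential-moment/Markov argument --- a valid, self-contained substitute (and your index bound $m=\lfloor (t-1)/2\rfloor$ is in fact slightly more careful than the paper's).
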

\begin{proof}
For each $1 \leq s \leq t- 1,$ let $Z_s$ be the random variable defined as in Lemma \ref{lem:2} with $\ell = 1.$ Now we define $Y_s=Z_{ 2 (s-1) +1}$ for  $1 \leq s \leq \floor{t/2}$ and observe that $\mathcal{G}_s:=\sigma(Y_1,\ldots, Y_{s})\subset \cF_{2 s}.$ Thus, by Lemma \ref{lem:2}, 
$$
\P(Y_s=1|\mathcal{G}_{s-1})=\E\Big[\P\Big(Z_{2 (s-1)+1}=1|\cF_{2 (s-1)}\Big)|\mathcal{G}_{s-1}\Big]\geq p_{min}^{|F| +1}. 
$$
Define $q_*= p_{min}^{|F| +1}$. Then Lemma A.3 of \cite{CT} implies for every $ \nu \in ] 0, 1 [ , $ 
\begin{equation*}
\P\left(\frac{1}{\floor{t/2}}\sum_{s=1}^{\floor{t/2}}Y_s< \nu q_*  \right) \leq \exp\left\{- \floor{t/2}\frac{q_*}{4}\Big(1-\nu \Big)^2\right\}.
\end{equation*}
Clearly $N_{(i,t)}(w)=\sum_{s=1}^{t-1}Z_s\geq \sum_{s=1}^{\floor{t/2}}Y_s$, so that it follows from the inequality above that 
$$
\P\left(N_{(i,t)}(w)< \nu q_*  \floor{t/2} \right) \leq \exp\left\{- \floor{t/2}\frac{q_*}{4}\Big(1-\nu \Big)^2\right\}.
$$
Finally, for any fixed $\nu\in (0,1)$ and all $t$ large enough, $\floor{t/2}\frac{q_*}{4}(1-\nu )^2> t^{1/2+\xi}$ and $\nu q_*  \floor{t/2} > t^{1/2+\xi}$, implying the assertion. 

\end{proof}

\section{Proof of Theorem \ref{thm:1}}\label{sec:proof.finite.case}
Suppose that $V_i \subset F  $ and notice that for any $\ell\geq 1$ and $w\in \{0, 1 \}^{ \{ - \ell, \ldots, - 1 \} \times V_i}$, it holds that
\begin{equation}
\label{def:transition_proba}
p_i ( 1 | w )=\varphi_i \left( \sum_{j\in V_i} W_{j \to i } \sum_{ t = - \ell}^{ - 1} g_j ( -t) w_t (j) \right). 
\end{equation}

\begin{proof}[Proof of Item 1 of Theorem \ref{thm:1}]
Using the definition of $\hat{V}^{( \epsilon)}_{(i,n)}$ and applying the union bound, we deduce that
\begin{eqnarray}
\label{prop3:ineq.1}
\P\big(j\in \hat{V}^{(\epsilon)}_{(i,n)}\big)&= & \P(\Delta_{(i,n)}(j)>\epsilon)\nonumber \\
&\leq & \E\left[\sum_{(w,v) }
\one\left\{A^{w,v,j}_{(i,n)},|\hat{p}_{(i,n)}(1|w)-\hat{p}_{(i,n)}(1|v)|>\epsilon \right\}\right],
\end{eqnarray}
where $A^{w,v,j}_{(i,n)}:=\{(w,v)\in \mathcal{T}_{(i,n)}\times \mathcal{T}^{w,j}_{(i,n)}\}$.
Since $j\notin V_i$ and $V_i\subset F$, the configurations of any pair 
$(w,v)\in \mathcal{T}_{(i,n)}\times\mathcal{T}^{w,j}_{(i,n)} $ coincide in restriction to the set $V_i.$ In other words,  $w(V_i)= v(V_i) .$ In particular, it follows from \eqref{def:transition_proba} that $p_i(1|w)=p_i(1|w(V_i))=p_i(1|v(V_i))=p_i(1|v)$. 

Therefore, applying the triangle inequality, it follows that on $A^{w,v,j}_{(i,n)}$,
\begin{equation*}
\one{\left\{|\hat{p}_{(i,n)}(1|w)-\hat{p}_{(i,n)}(1|v)|>\epsilon\right\}}\leq \sum_{u\in\{w,v\}} 
\one{\left\{|\hat{p}_{(i,n)}(1|u)-p_{i}(1|u)|>\epsilon /2\right\}},
\end{equation*}
so that the expectation in \eqref{prop3:ineq.1} can be bounded above by
\begin{equation}
\label{prop3:ineq.2}
2\E\left[\sum_{w }
\one_{\left\{w\in\mathcal{T}_{(i,n)}\right\}}
\sum_{v }\one_{\left\{
v\in\mathcal{T}_{(i,n)}\right\}}\one\left\{|\hat{p}_{(i,n)}(1|v)-p_{i}(1|v)|>\epsilon  /2\right\}\right].
\end{equation}
Now, since $\sum_{w}N_{(i,n)}(w) \le n ,$ we have that
$$
n \geq \sum_{w :\ N_{(i,n)}(w)\geq n^{1/2+\xi}}N_{(i,n)}(w) \, \geq \, n^{1/2+ \xi} \; | \{  w :\ N_{(i,n)}(w)\geq n^{1/2+\xi}\}  | ,
$$
which implies that 
$|\mathcal{T}_{(i,n)}|\leq n^{1/2-\xi}.$ From this last inequality and Proposition \ref{prop:2}, which is stated in Section \ref{sec:dev.ineq} below, we obtain the following upper bound for \eqref{prop3:ineq.2},
\begin{equation}
\label{prop3:ineq.3}
4n^{1/2-\xi}\exp\left\{-\frac{\epsilon^2 n^{2\xi}}{2}\right\}\E\left[\sum_{w }
\one{\left\{N_{(i,n)}(w)>0\right\}}
\right].
\end{equation}  
Since $\sum_{w}
\one{\left\{N_{(i,n)}(w)>0\right\}}\leq n$, the result follows from  inequalities \eqref{prop3:ineq.1} and \eqref{prop3:ineq.3}.
\end{proof}

Before proving Item 2 of Theorem \ref{thm:1},  we will prove the following lemma.

\begin{lemma}
\label{lem:4}
Suppose that $V_i$ is finite and define for each $j\in V_i,$  
$$
m_{i,j}:=\max_{w,v \in \{0, 1 \}^{\{- 1 \} \times V_i }:w_{\{j\}^c}=v_{\{j\}^c} }|p_i(1|w)-p_i(1|v)| .
$$
Then, under Assumption \ref{ass:4}, we have that
\begin{equation}
\label{Keybound}
\inf_{j\in V_i}m_{i,j}\geq \inf_{x\in K_{i}} \left\{\varphi_i'(x)\right\} \inf_{j\in V_i}\left\{|W_{j\to i}|g_j(1)\right\}=m_{i}>0,
\end{equation}
where $K_{i}$ is defined in \eqref{def:Kil}.
\end{lemma}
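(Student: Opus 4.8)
The plan is to reduce the maximum defining $m_{i,j}$ to a single comparison between two one-step configurations differing only in the $j$-th coordinate, and then to estimate the resulting difference of $\varphi_i$-values by the mean value theorem.

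First I would fix $j\in V_i$ and a pair $w,v\in\{0,1\}^{\{-1\}\times V_i}$ with $w_{\{j\}^c}=v_{\{j\}^c}$. Specializing the explicit formula \eqref{def:transition_proba} to $\ell=1$, the argument of $\varphi_i$ in $p_i(1|w)$ is $\sum_{k\in V_i}W_{k\to i}g_k(1)w_{-1}(k)$. Since $w$ and $v$ coincide away from coordinate $j$, the quantity $u:=\sum_{k\in V_i\setminus\{j\}}W_{k\to i}g_k(1)w_{-1}(k)$ is common to both, and the difference $p_i(1|w)-p_i(1|v)$ depends only on $w_{-1}(j),v_{-1}(j)\in\{0,1\}$. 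Hence, for a fixed off-$j$ configuration, this difference is nonzero only when the two binary values differ, and maximizing over the off-$j$ configuration as well gives
\[
m_{i,j}=\max_{u}\big|\varphi_i\big(u+W_{j\to i}g_j(1)\big)-\varphi_i(u)\big|,
\]
where $u$ ranges over all admissible off-$j$ sums.

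Next I would apply the mean value theorem to each such difference, writing it as $\varphi_i'(\zeta)\,|W_{j\to i}|\,g_j(1)$ for some $\zeta$ lying between $u$ and $u+W_{j\to i}g_j(1)$ (here I use that $g_j(1)>0$). The key geometric observation is that both of these numbers have the form $\sum_{k\in V_i}W_{k\to i}g_k(1)\,b(k)$ for some $b\in\{0,1\}^{V_i}$, and that the extreme values of such sums, obtained by activating exactly the negative, respectively the positive, weights, are precisely the two endpoints of the interval $K_i$ from \eqref{def:Kil}. Consequently both numbers, and therefore the intermediate point $\zeta$, belong to $K_i$, so that
\[
m_{i,j}\geq\Big(\inf_{x\in K_i}\varphi_i'(x)\Big)|W_{j\to i}|\,g_j(1).
\]
Taking the infimum over $j\in V_i$ and factoring out the $j$-independent derivative infimum then yields \eqref{Keybound}.

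It remains to verify $m_i>0$, which I would split into its two factors. Since $V_i$ is finite, $\inf_{j\in V_i}|W_{j\to i}|g_j(1)$ is a minimum of finitely many strictly positive numbers, each positive because $W_{j\to i}\neq0$ for $j\in V_i$ by \eqref{def:Vi} and $g_j(1)>0$; hence it is positive. For the derivative factor, $K_i$ is a bounded closed interval and $\varphi_i'$ is continuous by Assumption \ref{ass:4}, so the infimum is attained at some $x^\ast\in K_i$. The point requiring the most care, and what I expect to be the main obstacle, is to conclude that $\varphi_i'(x^\ast)$ is strictly positive rather than merely nonnegative; I would derive this from the strict monotonicity of $\varphi_i$ together with compactness of $K_i$. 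Combining the two positivity statements gives $m_i>0$ and finishes the proof.
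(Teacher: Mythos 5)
Your proposal is correct and follows essentially the same route as the paper's proof: reduce to a pair of one-step configurations differing only at coordinate $j$, apply the mean value theorem with the intermediate point located in the interval $K_i$, identify the resulting difference of arguments as $|W_{j\to i}|g_j(1)$, and use finiteness of $V_i$ together with strict positivity of the $g_j$ for the positivity of $m_i$. The step you flag as the main obstacle --- strict positivity of $\inf_{x\in K_i}\varphi_i'(x)$ --- is resolved in the paper by exactly the appeal you suggest, namely strict monotonicity of $\varphi_i$ under Assumption \ref{ass:4} combined with the boundedness of $K_i$, so your treatment matches the paper's on this point as well.
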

\begin{proof}
For each $j\in V_i$ take any pair $w,v \in \{0, 1 \}^{\{- 1 \} \times V_i }$  such that $w_{\{j\}^c}=v_{\{j\}^c}$ with 
$w_{-1}(j)=1$ and $v_{-1}(j)=0$.  
By Assumption \ref{ass:4}, the function $\varphi_i$ is differentiable such that, for $\ell =  1 , $ 
$$
|p_i(1|w)-p_i(1|v)|\geq \inf_{x\in K_{i}}\left\{\varphi_i'(x)\right\}\Big|\sum_{k\in V_i}W_{k\to i}\sum_{t= \ell  }^{-1}g_k(-t)(w_{t}(k)-v_{t}(k))\Big|.
$$
Since $|\sum_{k\in V_i}W_{k\to i}\sum_{t=-\ell}^{-1}g_k(-t)(w_{t}(k)-v_{t}(k))\Big|=|W_{j\to i}|g_j(1)$, the inequality above implies the first assertion of the lemma.

By Assumption \ref{ass:4}, the function $\varphi_i$ is strictly increasing ensuring that $\inf_{x\in K_{i}}\left\{\varphi_i'(x)\right\}>0$. Thus, since for all $j\in I$ the sequence $g_j$ is strictly positive and $V_i \neq \emptyset $ is finite, we clearly have that $m_{i}>0$.    
\end{proof}

We are now in position to conclude the proof of Theorem \ref{thm:1}.

\begin{proof}[Proof of Item 2 of Theorem \ref{thm:1}]
Lemma \ref{lem:4} implies that $m_i$ defined in \eqref{Keybound} is positive. Let $0<\epsilon<m_{i}$. If $j\in V_i$, Lemma \ref{lem:4} implies the existence of strings $w^*,v^*\in\{0,1\}^{\{-1\}\times F\setminus\{i\}}$ such that $w^*_{\{j\}^c}=v^*_{\{j\}^c}$ and 
$$
|p_i(1|w^*)-p_i(1|v^*)|=|p_i(1|w^*(V_i))-p_i(1|v^*(V_i))|  \geq m_{i}.
$$ 
Denoting by $C_n=\{N_{(i,n)}(w^*)\geq n^{\xi+1/2},N_{(i,n)}(v^*)\geq n^{\xi+1/2}\}$ it follows that
\begin{equation}
\label{prop.3:ineq.1}
\P\left(j\notin \hat{V}_{(i, n)}^{(\epsilon)}\right)\leq \P(|\hat{p}_{(i,n)}(1|w^*)-\hat{p}_{(i,n)}(1|v^*)|<\epsilon, C_n ) +\P(C^c_n).
\end{equation} 
Now notice that the first term on the right in \eqref{prop.3:ineq.1} is upper bounded by
$$
\sum_{u\in\{w^*,v^*\}}\P(|\hat{p}_{(i,n)}(1|u)-p_i(1|u)|>(m_{i}-\epsilon)/2, N_{(i,n)}(u)\geq n^{\xi+1/2} ),
$$
and since $m_{i}>\epsilon$, the result follows from Proposition  \ref{prop:2} and Lemma \ref{lem:N_n_notsmal}, both stated in Section \ref{sec:dev.ineq} above.
 
\end{proof}

\begin{proof}[Proof of Item 3 of Theorem \ref{thm:1}]
Define for $n\in\N$ the sets 
$$
O_n=\left\{j\in F\setminus V_i: j\in V_{(i,n)}^{(\epsilon_n)}\right\} \ \mbox{and} \ U_n=\left\{j\in V_i: j\notin V_{(i,n)}^{(\epsilon_n)}\right\}.
$$ 
Applying the union bound and then Item 1, we infer that 
$$
\P(O_n)\leq 4\left(|F|-|V_i|\right)n^{3/2-\xi}\exp\left\{-\frac{\epsilon_n^2 n^{2\xi}}{2}\right\}.
$$
Applying once more the union bound and then using Item 2, we also infer that 
$$
\P(U_n)\leq |V_i|\left(4\exp\left\{-\frac{(m_{i}-\epsilon_n)^2 n^{2\xi}}{2}\right\}+\exp\left\{-O\left(n^{1/2+\xi}\right)\right\}\right).
$$
Since $\{V^{(\epsilon)}_{(i,n)}\neq V_i\}=O_n\cup U_n$, we deduce that
$
\sum_{n=1}^{\infty}\P\left(V^{(\epsilon_n)}_{(i,n)}\neq V_i\right)< \infty,
$
so that the result follows from the Borel-Cantelli Lemma.
\end{proof}

\section{Proof of Theorem \ref{thm:main2}}
\label{sec:proofs.infinite.case}
To deal with the case $V_i \not \subset F ,$ we couple the process $X=(X_t  )_{t \in \Z} $ with its fixed range approximation $ X^{[F]}=(X^{[F]}_t  )_{t \in \Z}  , $ where $ X^{[F]} $ follows the same dynamics as $X$, defined in \eqref{def:1} and \eqref{def:2} for all $ j \neq i,$ except that \eqref{def:2} is replaced -- for the fixed neuron $i$ -- by 
\begin{equation}\label{def:2bis}
\P(X^{[F]}_{t+1}(i)=1|\cF_{t})= \varphi_i\Big( \sum_{j\in V_i \cap F  }W_{j\to i}\sum_{s=L_{t}^{i,[F]}+1}^{t} g_j(t+1 -s)X^{[F]}_s(j)\Big).
\end{equation}
Moreover, we suppose that $X$ and $X^{[F]} $ start from the same initial configuration $X_{-\infty}^0 =(X^{[F]})_{-\infty}^0 = x , $ where $x \in \Omega^{adm}.$ 

We will show in  Proposition \ref{thm:couplage} in the Appendix that
Assumptions  \ref{ass:1}, \ref{ass:6} and \ref{ass:5}  imply the existence of a coupling between $X$ and $X^{[F]}$ and of a constant $ C > 0 $  such that 
\begin{equation}\label{eq:discrepancy} \sup_{j\in I }  \P  \left(  \exists t \in [1, n ] : X_t (j ) \neq X_t^{[F]} (j) \right)  \le C ( e^{ \alpha_0  n } \vee n) \sum_{ j \notin V_i \cap F} | W_{j \to i } | . \end{equation}

Write 
$$ 
E_n = \bigcap_{ 1 \le s \le n  } \{ X_{s } (i ) = X_s^{ [ F ] } (i )   \} .
$$
On $E_n, $ instead of working with $X_s (i) , 1 \le s \le n , $ we can therefore work with its approximation $X_s
^{[F]} ( i ) , 1 \le s \le n ,$ having conditional transition probabilities (for neuron $i$) given by
$$ p_i^{[F]} ( 1 | w) = \varphi_i \Big( \sum_{j\in V_i \cap F } W_{j \to i } \sum_{ s = - \ell}^{ - 1} g_j ( -s) w_j (s) \Big)  $$ 
which only depend on $ w (V_i\cap F) .$ As a consequence, on $E_n $ the proof of Theorem \ref{thm:main2} works as in the preceding section, except that we replace $m_i$ by
\begin{equation*}
m^{[F]}_{i}=\inf_{x\in K^{[F]}_{i}} \left\{\varphi_i'(x)\right\} \inf_{j\in V_i \cap F }\left\{|W_{j\to i}|g_j(1)\right\}>0 
\end{equation*}
if $ V_i \cap F \neq \emptyset.$
Here $K^{[F]}_{i}$ is defined by
\begin{equation*}
K^{[F]}_{i}=\Big[\ \sum_{j\in V^{-}_i \cap F}W_{j\to i}g_j(1),\sum_{j\in V^{+}_i \cap F}W_{j\to i}g_j(1)\Big] .
\end{equation*}
Finally, writing 
$$
O_n=\left\{j\in F\setminus V_i: j\in V_{(i,n)}^{(\epsilon_n)}\right\} \ \mbox{and} \ U_n=\left\{j\in V_i \cap F : j\notin V_{(i,n)}^{(\epsilon_n)}\right\},
$$
we obtain 
$$ \P (O_n) \le \P ( O_n \cap E_n ) + \P ( E_n^c) , \; \P (U_n) \le \P ( U_n \cap E_n ) + \P ( E_n^c),$$
where as before in Theorem \ref{thm:1},
$$ \P(O_n \cap E_n )\leq 4 n^{3/2-\xi}\exp\left\{-\frac{\epsilon_n^2 n^{2\xi}}{2}\right\} 
$$
and 
$$
\P(U_n \cap E_n ) )\leq |F |\left(4\exp\left\{-\frac{(m^{[F]}_{i}-\epsilon_n)^2 n^{2\xi}}{2}\right\}+\exp\left\{-O\left(n^{1/2+\xi}\right)\right\}\right).$$
Finally, by inequality \eqref{eq:discrepancy},
$$ \P( E_n^c) =   \P ( \exists t \in [1, n ] :  X_t (i) \neq X_t^{[F]} (i)  ) \le C  ( e^{ \alpha_0n } \vee n) \sum_{ j \notin V_i \cap F} | W_{j \to i } |  ,$$
for some constant $C.$ This concludes the proof.

\section{Time complexity of the estimation procedure}
\label{sec:proof.runtime}
The time complexity of our selection procedure has quadratic growth with respect to $n,$ the length of the time interval during which the neural network is observed.  This is the content of the following proposition. 
\begin{proposition}
\label{thm:runtime}
The number of operations needed to compute the set $\hat{V}^{(\epsilon)}_{(i,n)}$ is $O(n^2)$.
\end{proposition}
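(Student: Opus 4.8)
The plan is to exhibit an explicit algorithm computing $\hat{V}^{(\epsilon)}_{(i,n)}$ from the sample $X_1(F), \ldots, X_n(F)$ and to bound its number of elementary operations, regarding $|F|$ (the size of the fixed sampling region) and the parameters $\xi, \epsilon$ as constants.

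The decisive preliminary observation is that the double indexation by $(\ell, t)$ in the definition of $N_{(i,n)}(w,a)$ collapses. For a fixed time $t$, the event $\{X^{t-1}_{t-\ell-1}(i) = 10^\ell\}$ can hold for at most one length $\ell$, namely $\ell = t - 1 - L^i_{t-1}$, where $L^i_{t-1} = \sup\{s \le t-1 : X_s(i) = 1\}$. Hence each $t \in \{2, \ldots, n\}$ for which $i$ has spiked in $\{1, \ldots, t-2\}$ contributes to exactly one observed word $w_t := X^{t-1}_{t - \ell_t}(F \setminus \{i\})$ of length $\ell_t := t - 1 - L^i_{t-1}$, with associated symbol $a = X_t(i)$. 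In particular there are at most $n$ words $w$ with $N_{(i,n)}(w) > 0$. First I would recover all last-spike times $L^i_{t-1}$ and the lengths $\ell_t$ in a single left-to-right pass, at cost $O(n)$.

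I would then accumulate the counts $N_{(i,n)}(w,0), N_{(i,n)}(w,1)$ by inserting each observed word $w_t$ into a trie keyed on its symbols, incrementing the appropriate counter at the node terminating $w_t$. Inserting a word of length $\ell_t$ costs $O(\ell_t)$, so this phase costs $O\big(\sum_t \ell_t\big)$. This is the step I expect to dominate and to be the only genuine obstacle: a naive reading would treat each word as an $O(1)$ object, but the words may be long, with $\ell_t$ as large as $n$ (e.g.\ when $i$ spikes only once near the beginning of $[1,n]$, so that $\ell_t$ grows linearly in $t$). Since $\ell_t \le n$ and there are at most $n$ contributing times, $\sum_t \ell_t \le n^2$, and this order is attained in the worst case; thus this phase costs $O(n^2)$.

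Finally I would extract $\mathcal{T}_{(i,n)} = \{w : N_{(i,n)}(w) \ge n^{1/2+\xi}\}$ and the values $\hat{p}_{(i,n)}(1|w)$ by one scan over the at most $n$ stored words, using the bound $|\mathcal{T}_{(i,n)}| \le n^{1/2 - \xi}$ established in the proof of Item 1 of Theorem \ref{thm:1}. For each of the $O(1)$ neurons $j \in F \setminus \{i\}$, computing $\Delta_{(i,n)}(j)$ amounts to scanning the at most $|\mathcal{T}_{(i,n)}|^2 \le n^{1-2\xi}$ pairs $(w,v)$ with $|w| = |v|$, testing the condition $v_{\{j\}^c} = w_{\{j\}^c}$ at cost $O(\ell) = O(n)$ per pair, and retaining the maximal spread of the values $\hat{p}_{(i,n)}(1|\cdot)$. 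This costs $O(n^{1-2\xi}\cdot n) = O(n^{2-2\xi})$ per $j$, hence $O(n^{2-2\xi}) = o(n^2)$ overall; thresholding the quantities $\Delta_{(i,n)}(j)$ at $\epsilon$ yields $\hat{V}^{(\epsilon)}_{(i,n)}$. Summing the three phases gives the announced $O(n^2)$ bound, the cost being governed by the count-accumulation step.
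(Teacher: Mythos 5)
Your proof is correct, and at the top level it follows the same two-phase scheme as the paper's: first bound the accumulation of the counts $N_{(i,n)}(\cdot)$ by $O(n^2)$, then show that computing the quantities $\Delta_{(i,n)}(j)$ from these counts costs only a lower-order term, via $|\mathcal{T}_{(i,n)}|\le n^{1/2-\xi}$ (the paper bounds the number of relevant pairs by $\sum_{w\in\mathcal{T}_{(i,n)}}|\mathcal{T}^{w,j}_{(i,n)}|\le n^{1-2\xi}$, you by $|\mathcal{T}_{(i,n)}|^2\le n^{1-2\xi}$; same estimate). Where you genuinely diverge is in the accounting of the counting phase. The paper loops over the $O(n)$ admissible lengths $\ell$ and, for each, makes one pass over the sample, charging one operation per time step to increment the count of the past that has occurred at time $t$; its $O(n^2)$ comes from the number of pairs $(\ell,t)$, i.e.\ $\sum_{\ell}(n-\ell-1)$. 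You instead exploit the fact that the suffix condition $X^{t-1}_{t-\ell-1}(i)=10^{\ell}$ pins down $\ell=t-1-L^i_{t-1}$ uniquely, so each time $t$ produces exactly one observed word (hence at most $n$ words have positive count), and you charge the honest cost $O(\ell_t)$ of reading that word and inserting it into a trie; your $O(n^2)$ comes from $\sum_t \ell_t\le n^2$. Your bookkeeping is more careful on two points the paper silently idealizes: literally initializing $N_{(i,n)}(w)=0$ for all $2^{\ell(|F|-1)}$ pasts of a given length is not feasible, so a sparse structure such as your trie is in fact needed; and identifying the length-$\ell$ past occurring at time $t$ is not a unit-cost operation unless it is maintained incrementally, whereas your per-word charge is self-contained (your single-early-spike example also shows the $n^2$ order is genuinely attained). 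Similarly, in the second phase you charge $O(n)$ per coordinatewise comparison $v_{\{j\}^c}=w_{\{j\}^c}$, getting $O(n^{2-2\xi})=o(n^2)$ where the paper charges $O(1)$ per pair; either way the counting phase dominates and the total is $O(n^2)$.
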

\begin{proof}

All the random variables involved in the definition of the random set $\hat{V}^{(\epsilon)}_{(i,n)}$ can be written in terms of the counting variables ${N}_{(i,n)}(\cdot)$. All  counting variables ${N}_{(i,n)}(w)$ for $w\in \{0,1\}^{\{-\ell,\ldots, -1\}\times F\setminus\{i\}}$ with fixed length $\ell$ can be computed simultaneously  after $n-\ell-1$ operations. Indeed, we set initially $N_{(i,n)}(w)=0$ for all pasts $w\in \{0,1\}^{\{-\ell,\ldots, -1\}\times F\setminus\{i\}}$ and then we increment by $1$ the count of the past that has occurred at time $\ell+2\leq t\leq n$, leaving the counts of all other pasts unchanged.
Thus with 
$$
\sum_{l=1}^{\floor{n-2-n^{1/2+\xi}}}(n-\ell-1)\leq O\big(n^2-n^{1+2\xi}\big)
$$
operations we compute all the counting variables
${N}_{(i,n)}(w)$, for all local pasts $w\in \{0,1\}^{\{-\ell,\ldots, -1\}\times F\setminus\{i\}},$ for all $1\leq \ell\leq \floor{n-2-n^{1/2+\xi}}$, where for each $x\in \R$, $\floor{x}$ is the largest integer less than or equal to x $ $.

Now, given all counting variables, to compute $\Delta_{(i,n)}(j)$ we need at most $O\big(\sum_{w\in \mathcal{T}_{(i,n)}}|\mathcal{T}^{w,j}_{(i,n)}|\big)\leq O(n^{1-2\xi})$ computations which in turns implies that, given all counting variables, with at most $O(|F|n^{1-2\xi})$ operations we compute our estimator $\hat{V}^{(\epsilon)}_{(i,n)}$. Therefore, in the overall, we need to perform at most 
$$
O\big(n^2-n^{1+2\xi}\big)+O(|F|n^{1-2\xi})=O(n^2)
$$   
operations. 
\end{proof} 
     
\appendix

\section{Auxiliary results}
\label{sec:proba}
 In this section,  
we prove the coupling result \eqref{eq:discrepancy} needed in the proof of Theorem \ref{thm:main2}. For that sake, 
let $F\subset I$ be a finite set, fix $i\in F$ and let $ U_t (j ) , j \in I, t \geq 1  , $ be an i.i.d.\ family of random variables uniformly distributed on $ [0, 1 ].$ 

The coupling is defined as follows.
For any $x\in\Omega^{adm}$, we define  $X_t(j)=X^{[F]}_t(j)=x_t(j)$ for each $t\leq 0$ and $j\in I$. For each $t\geq 1$ and $j\in I$, we define

$$
X_t(j)=
\begin{cases}
1, \ \mbox{if} \  U_t(j)>\varphi_j (\eta_{t-1} (j) )\\
0, \ \mbox{if} \  U_t(j)\leq \varphi_j (\eta_{t-1}(j))
\end{cases}
$$
and
$$ 
X^{[F]}_t(j)=
\begin{cases}
1, \ \mbox{if} \  U_t(j)>\varphi_j (\eta^{[F]}_{t-1} (j) )\\
0, \ \mbox{if} \  U_t(j)\leq \varphi_j (\eta^{[F]}_{t-1}(j) ),
\end{cases}
$$
where for each $t\geq 0$ and $j\in I$,
$$
\eta_{t}(j)  = \sum_{k\in V_j}W_{k\to j}\sum_{s=L^j_{t} +1 }^{t} g_k(t+1-s) X_s(k)
$$ 
and,  if $j\neq i$,
\begin{equation}\label{eq:uL}
 \eta^{[F]}_{t} (j)  = \sum_{k\in V_j}W_{k\to j}\sum_{s=L^{j, [F]}_{t} +1 }^{t} g_k(t+1-s)X^{[F]} _s(k),
\end{equation}
and finally
\begin{equation}\label{eq:uLj}
 \eta^{[F]}_{t} (i) = \sum_{k\in V_i \cap F }W_{k\to i}\sum_{s=L^{i, [F]} _{t} +1 }^{t} g_k(t+1-s)X^{[F]} _s(k).
\end{equation}
In other words, the process $X^{[F]}$ has exactly the same dynamics as the original process $X$, except that neuron $i$ depends only on neurons belonging to $ V_i \cap F . $ 
Notice that we use the same uniform random variables $U_t (j) $ to update the values of $ X_t (j) $ and of $ X_t^{[F]} ( j) .$ In this way we achieve a coupling between the two processes. We shall write $\E_x$ to denote the expectation with respect to this coupling. Then we have the following result.

\begin{proposition}\label{thm:couplage}
Assume Assumptions  \ref{ass:1}, \ref{ass:6} and \ref{ass:5}, and   let $ \alpha_0 $ be defined as in \eqref{eq:alpha_0}.
\\
1. If $ \alpha_0 > 0 ,$  then 
\begin{equation}\label{eq:discrepancy3}
\sup_{j\in I }  \P_{x} ( \bigcup_{s=1}^t \{X_s (j ) \neq X_s^{[F]} (j)\}   ) \le C e^{ \alpha_0  t } \sum_{ k \notin V_i \cap F } | W_{k \to i } | . 
\end{equation}
2. Suppose now that $ \alpha_0 = 0 $ and write for any $j\in I$, $\varrho_j  = \sum_{t=1}^\infty g_j (t) ,$ $\varrho = \sup_{j\in I} \varrho_i.$ Then 
\begin{equation}\label{eq:dobrushin}
\chi=(1- p_*) + \gamma  \sup_{j\in I} \sum_{k\in I} \varrho_k | W_{k \to I} |  < 1 ,
\end{equation}
and in this case
\begin{equation}\label{eq:discrepancy4}
\sup_{j\in I }  \P_{x} \Big( \bigcup_{s=1}^t \{X_s (j ) \neq X_s^{[F]} (j)\}   \Big)  \le \frac{ \gamma  \varrho t  }{ 1 -  \chi} \sum_{ k \notin V_i\cap F } | W_{k \to i } |  . 
\end{equation}
\end{proposition}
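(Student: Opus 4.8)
The plan is to track how a single \emph{intrinsic} discrepancy at neuron $i$ --- the fact that $X^{[F]}$ ignores the afferent neurons $k \in V_i \setminus F$ --- propagates through the network under the synchronous coupling. Since both chains are driven by the same uniforms $U_t(j)$, conditioning on $\cF_{t-1}$ gives exactly
\[
\P_x(X_t(j) \neq X_t^{[F]}(j) \mid \cF_{t-1}) = |\varphi_j(\eta_{t-1}(j)) - \varphi_j(\eta^{[F]}_{t-1}(j))|,
\]
and Assumption \ref{ass:6} bounds this by $\gamma|\eta_{t-1}(j) - \eta^{[F]}_{t-1}(j)|$. Writing $\delta_t(j) = \P_x(X_t(j) \neq X_t^{[F]}(j))$, the first step is to decompose the potential difference into (i) a \emph{source} term, present only for $j=i$, equal to $\sum_{k\in V_i\setminus F} W_{k\to i}\sum_s g_k(\cdot)X_s(k)$ and bounded in expectation by $\varrho\,\Sigma_i(F)$; and (ii) \emph{propagated} terms $\sum_{k\in V_j}W_{k\to j}\sum_s g_k(\cdot)(X_s(k)-X_s^{[F]}(k))$, whose expectation is controlled by $\sum_k \gamma|W_{k\to j}|\sum_u g_k(u)\delta_{t-u}(k)$. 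These propagated terms are exactly the off-diagonal entries $H_{j,k}(u) = \gamma|W_{k\to j}|g_k(u)$.

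The delicate point, and what I expect to be the main obstacle, is the variable-length memory: the windows $[L_{t-1}^j+1,t-1]$ and $[L_{t-1}^{j,[F]}+1,t-1]$ need not coincide once the two chains have disagreed on neuron $j$ itself. For $j \neq i$ this causes no new difficulty, since $j$ obeys identical dynamics in both chains and its window can only desynchronize after an input-driven disagreement already accounted for. For $j=i$, however, a disagreement desynchronizes $L^i$ and $L^{i,[F]}$, and this desynchronization persists and keeps feeding spurious input to the downstream neurons. I would control this persistence by observing that the two windows resynchronize as soon as $i$ spikes simultaneously in both chains, which under the coupling happens with conditional probability $1 - \max(\varphi_i(\eta_{t-1}(i)),\varphi_i(\eta^{[F]}_{t-1}(i))) \ge p_*$ at each step; hence the desynchronization survives one step with probability at most $1-p_*$. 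This is precisely the diagonal self-loop $H_{i,i}(t)=(1-p_*)1_{\{t=1\}}$, and making this domination rigorous (e.g.\ by a stochastic comparison with a geometric clock) is the crux of the argument.

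Assembling (i)--(ii) with the self-loop yields a renewal/convolution inequality $\delta_t(j) \le b_t(j) + \sum_k \sum_{u\ge1} H_{j,k}(u)\,\delta_{t-u}(k)$, where $b_t \le \gamma\varrho\,\Sigma_i(F)$ is supported on $j=i$. I would then pass to the operator $\Lambda(\alpha) = \sum_{u} e^{-\alpha u}H(u)$ by weighting $(\delta_t)$ with $e^{-\alpha_0 t}$. For $\alpha_0 > 0$, taking the supremum of $e^{-\alpha_0 t}\delta_t(j)$ over $t$ and $j$ and using $\|\!|\Lambda(\alpha_0)\|\!| < 1$ gives the geometric bound $\sup_{t,j} e^{-\alpha_0 t}\delta_t(j) \le \gamma\varrho\,\Sigma_i(F)/(1-\|\!|\Lambda(\alpha_0)\|\!|)$, so $\delta_t(j)\le C e^{\alpha_0 t}\Sigma_i(F)$. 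For $\alpha_0=0$ the same computation, now with $\|\!|\Lambda(0)\|\!|=\chi$, shows $\chi<1$ as claimed in \eqref{eq:dobrushin} and bounds $\delta_t(j)$ uniformly by $\gamma\varrho\,\Sigma_i(F)/(1-\chi)$.

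Finally I would convert the pointwise bounds into the stated bound on the union event through the crude estimate $\P_x(\bigcup_{s=1}^t\{X_s(j)\neq X_s^{[F]}(j)\}) \le \sum_{s=1}^t \delta_s(j)$. When $\alpha_0>0$ the geometric sum is again $O(e^{\alpha_0 t})$, giving \eqref{eq:discrepancy3}; when $\alpha_0=0$, summing a uniformly bounded $\delta_s(j)$ over the $t$ time steps produces the extra factor $t$ and yields \eqref{eq:discrepancy4} with constant $\gamma\varrho/(1-\chi)$. The two regimes then combine into \eqref{eq:discrepancy} via the $e^{\alpha_0 n}\vee n$ prefactor.
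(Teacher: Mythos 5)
Your overall strategy is the same as the paper's: synchronous coupling, a convolution inequality with source term at $i$ and propagation kernel $H_{j,k}(u)=\gamma|W_{k\to j}|g_k(u)$, inversion of $Id-\Lambda(\alpha_0)$, and the union bound $\P_x\big(\bigcup_{s=1}^t\{X_s(j)\neq X_s^{[F]}(j)\}\big)\le\sum_{s=1}^t\delta_s(j)$. You also correctly identify the crux: the variable-length memory, i.e.\ the possible mismatch of the windows $[L^j_{t-1}+1,t-1]$ and $[L^{j,[F]}_{t-1}+1,t-1]$. But your resolution of that crux has a genuine gap. You claim the desynchronization must be handled only at $j=i$, and that for $j\neq i$ it ``causes no new difficulty'' because it is ``already accounted for.'' This is false, and the recursion you write --- $\delta_t(j)\le b_t(j)+\sum_k\sum_{u\ge 1}H_{j,k}(u)\,\delta_{t-u}(k)$ with $\delta_t(j):=\P_x(X_t(j)\neq X_t^{[F]}(j))$ and a self-loop only at $(i,i)$ --- does not hold in general. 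Once $X_s(j)\neq X_s^{[F]}(j)$ for a single time $s$ (caused by a transient input discrepancy), the last spike times $L^j$ and $L^{j,[F]}$ differ, and from then on $\eta(j)$ and $\eta^{[F]}(j)$ are sums over \emph{different} windows: their difference is of order one, not controlled by the propagated terms, and the disagreement at $j$ keeps regenerating for as long as the windows stay desynchronized, which lasts a geometric time of parameter at least $p_*$. Concretely, if after time $s$ the inputs to $j$ recouple exactly, the right-hand side of your recursion for later times can be made arbitrarily small, while the left-hand side remains of order $\delta_s(j)(1-p_*)^{t-s}$. So a recursion in terms of configuration disagreement alone does not close; $W_{j\to j}=0$ gives you no diagonal term to absorb this persistence.

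The repair is exactly the mechanism you invoke for $i$, applied at \emph{every} neuron, and this is what the paper does. It tracks $D_j(t)=\one\{L^j_t\neq L^{j,[F]}_t\}$ (last-spike-time disagreement) rather than configuration disagreement, notes $\P_x(X_t(j)\neq X_t^{[F]}(j))\le\E_x[D_j(t)]$, and derives for every $j$ the one-step identity
\begin{equation*}
\E_x(D_j(t+1)\,|\,\cF_t)=D_j(t)\big(1-\varphi_j(\eta_t(j)\wedge\eta_t^{[F]}(j))\big)+\big|\varphi_j(\eta_t(j))-\varphi_j(\eta_t^{[F]}(j))\big|\big(1-D_j(t)\big).
\end{equation*}
The first term produces the self-loop $(1-p_*)\one\{u=1\}$ on every diagonal entry of the kernel, and the Lipschitz/propagation bound is invoked only on $\{D_j(t)=0\}$, where the windows coincide. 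This is also why $\chi$ in \eqref{eq:dobrushin} carries the additive term $(1-p_*)$ uniformly in $j$: it is the norm of a kernel with self-loops everywhere; if the self-loop were needed only at $i$, that term would not appear in the supremum over all $j$ --- your own formula for $\chi$ is thus inconsistent with your claim that only $i$ needs it. A second, smaller slip: in case 1 you bound the source term by $\gamma\varrho\,\Sigma_i(F)$, but Assumption \ref{ass:5} permits $\varrho=\sum_t g(t)=\infty$; when $\alpha_0>0$ the source must be controlled through the exponentially weighted sum $\sum_t e^{-\alpha_0 t}g(t)<\infty$ (the paper's $\|\tilde g\|_1$), which your $e^{-\alpha_0 t}$-weighting produces naturally if carried through the inhomogeneous term as well.
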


\begin{proof}
For notational convenience, we assume that the starting configuration $x \in \Omega^{adm}$ satisfies $x_0 (i) =1$ and extend the definition of $g_j$ by defining $g_j(t)=0$ for all t$\leq 0$ and $j\in I$.

We start proving Item 1. Recall  the definition of the continuous operator $ H (t)   $ in \eqref{eq:H}. In the sequel, we set also $H(0)\equiv 0$.

Let for each $t\geq 0$, 
$$ D_j(t ) =  1\{ L^j_t\neq L^{j,[F]}_t\} , j \in I,$$ 
and observe that 
\begin{equation}
\label{Eq:A6}
P_x(X_t(j)\neq X^{[F]}_t(j))\leq E_x[D_j(t)].
\end{equation}
Given $\cF_t$, we update $ D_j(t) $ as follows. If neuron $j$ spikes at time $t+1$ in both processes,  then $D_j (t+1) = 0 $ regardless the value of $D_j(t)$. By the definition of the coupling, this event occurs with probability $ \varphi_j ( \eta_t (j ) \wedge \eta_t^{[F]} ( j) ) \geq p_*  .$ 
When $D_j (t)=1$, then  ${D_j(t+1)=1}$ if and only if neuron $j$ does not spike in both processes. Clearly, this event has probability $1-\varphi_j(\eta_j(t)\wedge \eta^{[F]}_j(t))$. Finally, if $D_j(t)=0$, then ${D_j(t+1)=1}$  if and only if  neuron $j$ spikes only in one of the two processes. This event occurs with probability $| \varphi_j ( \eta_t (j ) ) - \varphi_j (\eta_t^{[F]} (j) ) |$. Thus for all $j\in I$, we have
\begin{multline}\label{eq:ouf}
\E_x ( D_j (t +1 ) | {\cal F}_t) =  D_j ( t) ( 1 - \varphi_j ( \eta_t (j ) \wedge \eta_t^{[F]} (j) ) \\
+   | \varphi_j ( \eta_t (j ) ) - \varphi_j (\eta_t^{[F]} ( j) ) |( 1 - D_j ( t))  .
\end{multline}
Since $\varphi_i $ is Lipschitz with Lipschitz constant $ \gamma $ and $ L_t^i =  L_t^{i, [F]} $ on $\{ D_i ( t) = 0 \} , $ we have on this event, 
\begin{multline}\label{eq:ouff}
\frac{1}{\gamma } \, | \varphi_i ( \eta_t (i ) ) - \varphi_i (\eta_t^{[F]} ( i) ) | \le   | \eta_t (i )  - \eta_t^{[F]} ( i)  | \\
\le \sum_{ k \in V_i \cap F} |W_{k \to i } |\sum_{ s= L_t^i  +1}^t g_k (t+1- s) | X_s (k ) - X_s^{[F]} (k)|  \\
+  \sum_{ k \notin V_i \cap F  } |W_{k \to i } | \sum_{ s=1  }^t  g_k (t+1-s ) \\
\le  \sum_{ k \in V_i \cap F} |W_{k \to i } |\sum_{ s= 1 }^{t+1} g_k  (t+1- s ) | X_s (k ) - X_s^{[F]} (k)|  \\
+ \sum_{ k \notin V_i \cap F  } |W_{k \to i } | \sum_{ s=   1  }^{t+1}  g_k (t+1-s ) ,
\end{multline}
where we have used that $ g_k (0) = 0 $  in order to replace the sum $ \sum_{s=1}^t $ by $ \sum_{s=1}^{t+1}.$ Moreover, we have used that $  L_t^i =  L_t^{i, [F]} \geq 0 $ for all $ t \geq 0, $ by our choice of $ x.$ 

Similarly, for all $j\neq i$, we have on $\{ D_j ( t) = 0 \} , $
\begin{equation}
\label{eq:ouff1}
\frac{1}{\gamma } \, | \varphi_j ( \eta_t (j ) ) - \varphi_j (\eta_t^{[F]} ( j) ) | \le \sum_{ k \in V_j } |W_{k \to j } |\sum_{ s= 1 }^{t+1} g_k  (t+1- s ) | X_s (k ) - X_s^{[F]} (k)|.
\end{equation}

For each $j\in I$, let $ \delta_j ( t) = \E_x ( D_j (t) ) $ and write $ \delta (t) = (\delta_j (t) )_{j \in I}$ for the associated column vector. Taking expectation in \eqref{eq:ouf}--\eqref{eq:ouff1} and using that $ \E_x | X_s (k ) - X_s^{[F]} (k)| \le \delta_k (s )  $ (see \eqref{Eq:A6}), we obtain 
\begin{equation}\label{eq:convolution} \delta (t+1) \le H * \delta (t +1) + \gamma  \Sigma_i(F)  g * 1 (t+1)  e_i,  
\end{equation}
where $e_i $ is the $i-$the unit vector. In the above formula, 
$$ ( H * \delta (t ))_j = \sum_{k\in I} \sum_{s=0}^t H_{j,k } (t- s) \delta_k (s) $$ 
is the operator convolution product, and the inequality in \eqref{eq:convolution} has to be understood coordinate-wise.  

Now let $\alpha_0$ be as in \eqref{eq:alpha_0} 
and introduce $\tilde H (t) = e^{-  \alpha_0    t } H (t) ,$ $ \tilde \delta (t) = e^{- \alpha_0  t }  \delta (t  )$ and $\tilde 1 (t) = e^{ - \alpha_0  t} .$ Multiplying the above inequality with $ e^{- \alpha_0 ( t+1) }, $ we obtain
$$ \tilde \delta (t+1 ) \le  \tilde H * \tilde \delta (t +1) + \tilde g * \tilde 1 (t+1 )  \gamma  \Sigma_i(F) e_i .$$
Let $ \| \tilde \delta  \|_1  = ( \| \tilde \delta_i  \|_1, i \in I)  $ be the column vector where each entry is given by $\| \tilde \delta_i  \|_1 = \sum_{t=0}^\infty \delta_i (t) .$  Then we obtain,  summing over $t  \geq 0, $ 
$$
 \| \tilde \delta  \|_1 
\le \Lambda ( \alpha_0)  \| \tilde \delta  \|_1 + \frac{1}{1 - e^{ - \alpha_0 }} \| \tilde g \|_1    \Sigma_i(F)   e_i ,\\
$$
implying that 
\begin{equation}\label{eq:A11}
 ( Id -\Lambda ( \alpha_0)   )\| \tilde \delta  \|_1 \le  \frac{1}{1 - e^{ - \alpha_0 }} \| \tilde g \|_1    \Sigma_i(F) e_i  .
\end{equation}

By \eqref{eq:alpha_0}, $Id -\Lambda ( \alpha_0)$ is invertible, and it is well-known that the operator norm of the inverse is bounded by 
$$ \| \!| ( Id - \Lambda ( \alpha_0))^{-1} \| \!| \le ( 1 - \| \!| \Lambda ( \alpha_0) \| \!| )^{-1} = C( \alpha_0) .$$
Moreover, $(Id -\Lambda ( \alpha_0))^{-1} : \ell_\infty^+ \to \ell_\infty^+ , $ where $ \ell_\infty^+ = \{ ( \xi_j)_{j\in I} :  \xi_j \geq 0  \} .$ Therefore, \eqref{eq:A11} implies 
\begin{equation}
\sup_{j\in I} \| \tilde \delta_j  \|_1 \le \left[ \frac{1}{1 - e^{ - \alpha_0 }} \| \tilde g \|_1  \Sigma_i(F) \right] C (\alpha_0)  .
\end{equation}
By using the union bound and  \eqref{Eq:A6}, it follows that
$$
  \sup_{j\in I } \P_{x} ( \exists s \in [1, t ] : X_s (j ) \neq X_s^{[F]} (j)  )  \le \sup_{j\in I} 
  \sum_{ s=1}^t   \delta_j (s ) \le  e^{ \alpha_0t } \sup_{j\in I} \| \tilde \delta_{j}  \|_1,
$$
which implies the assertion of Item 1.

The proof of Item 2 is similar to the above argument, except that now it is possible to work directly with $ g(t ) $ instead of $\tilde g (t) .$ In this case, we write simply $ \bar \delta (t) = \sup_{j\in I} \delta_j ( t) .$ \eqref{eq:convolution} implies that 
$$ (\sup_{0 \le s \le t }\bar \delta ( s)) \le \chi \, (\sup_{0 \le s \le t }\bar \delta (s))  + \gamma  \varrho \sum_{ k \notin V_i \cap F  } |W_{k \to i } |  , $$
which implies the assertion.
 
\end{proof}


\begin{proof}[Proof of \eqref{eq:discrepancy}]
The coupling inequality \eqref{eq:discrepancy} follows now directly from \eqref{eq:discrepancy3} (\eqref{eq:discrepancy4}, respectively).
\end{proof}

%

\section*{Acknowledgments}

This work is part of USP project {\em Mathematics, computation, language
and the brain}, FAPESP project {\em Research, Innovation and
Dissemination Center for Neuromathematics} (grant 2013/07699-0), CNPq projects {\em Stochastic modeling of the brain activity}
(grant 480108/2012-9) and {\em Plasticity in the brain after a brachial plexus lesion} (grant 478537/2012-3), and of the project Labex MME-DII (ANR11-LBX-0023-01).

AD and GO are fully supported by a FAPESP fellowship (grants  2016/17791-9 and 2016/17789-4 respectively). AG is partially supported  by CNPq fellowship (grant 309501/2011-3.) 

We thank the anonymous reviewers for their valuable comments and suggestions which helped us to improve the paper. We warmly thank B. Lindner and A.C. Roque  for indicating us important references concerning stochastic models for neuronal activity.  
\bibliography{Bibli}{}
\bibliographystyle{imsart-nameyear}
\nocite{GalEva:13}
\nocite{Errico:14}
\nocite{AG:14}
\nocite{PierreEva:14}
\nocite{Evafou:14}
\nocite{EvaGalves:15}
\nocite{Duarte_Ost:15}
\nocite{Karina:15}
\nocite{RobTou:14}
\nocite{Montanari:09}
\nocite{Bresler:15}
\nocite{GalOrlTak:15}
\nocite{BreElcSly:08}
\nocite{LochOrla:11}
\nocite{lerTak:11}
\nocite{lerTak:16}
\nocite{ravikumar:10}
\nocite{reynaud:13}
\nocite{brillinger:88}
\nocite{GalvesLeonardi:08}
\end{document}